\newcommand{\real}{{\mathbb{R}}}
\newcommand{\nat}{{\mathbb{N}}}
\newcommand{\Hp}[2][]{H^{#2}_{#1}}
\newcommand{\Lp}[1][2]{L_{#1}}
\newcommand{\Cp}[1][0]{C^{#1}}
\newcommand{\mesh}[1][h]{\mathcal{T}_{#1}}
\newcommand{\cgspace}{V_{\sf FEM}}
\newcommand{\cgspacei}[1][i]{V_{{\sf FEM},#1}}
\renewcommand{\vec}[1]{\bm{#1}}
\newcommand{\F}{\mathsf F}
\newcommand{\wt}{\widetilde}
\newcommand{\dx}{\, \mathrm{d}\vec{x}}
\newcommand{\ds}{\, \mathrm{d} s}
\newcommand{\e}{\mathrm{e}}
\newcommand{\jl}{\left[\!\left[}
\newcommand{\jr}{\right]\!\right]}
\newcommand{\jmp}[1]{\jl#1\jr}
\newcommand{\order}[1]{\mathcal{O}\left(#1\right)}
\newcommand{\massmat}{\bm{M}}
\DeclarePairedDelimiter{\abs}{\lvert}{\rvert}
\DeclarePairedDelimiter{\norm}{\lVert}{\rVert}
\newcommand{\tnorm}[2][]{#1\lvert\!#1\lvert\!#1\lvert #2 #1\rvert\!#1\rvert\!#1\rvert_\Omega}
\newcommand\tnorm*[1]{\left\lvert\!\left\lvert\!\left\lvert #1 \right\rvert\!\right\rvert\!\right\rvert_\Omega}
\newcommand{\emptyarg}{\:\cdot\:}
\newtheorem{algorithm}[equation]{Algorithm}
\newtheorem{Remark}[equation]{Remark}
\newenvironment{remark}{\begin{Remark}\rm}{\end{Remark}}
\newtheorem{theorem}[equation]{Theorem}
\newtheorem{proposition}[equation]{Proposition}
\newtheorem{lemma}[equation]{Lemma}
\numberwithin{equation}{section}
\newcommand{\secref}[1]{Section~\ref{#1}}
\newcommand{\thref}[1]{Theorem~\ref{#1}}
\newcommand{\subfigref}[2]{Figure~\ref{#1}\subref{#1:#2}}
\begin{document}

\title[Iterative Galerkin Discretizations for Strongly Monotone Problems]{Iterative Galerkin Discretizations\\ for Strongly Monotone Problems}

\author[S.~Congreve]{Scott Congreve}
\address{%
    Fakult\"at f\"ur Mathematik\\
    Universit\"at Wien\\
    Oskar-Morganstern-Platz 1\\
    A-1090 Wien\\
    Austria%
}
\email{scott.congreve@univie.ac.at}

\author[T.~P.~Wihler]{Thomas P. Wihler}
\address{%
    Mathematisches Institut\\
    Universit\"at Bern\\
    Sidlerstrasse 5\\
    CH-3012 Bern\\
    Switzerland%
}
\email{wihler@math.unibe.ch}

\thanks{Both authors acknowledge the support of the Swiss National Science Foundation (SNF)}

\begin{abstract}
In this article we investigate the use of fixed point iterations to solve the Galerkin approximation of strictly monotone problems. As opposed to Newton's method, which requires information from the previous iteration in order to linearise the iteration matrix (and thereby to recompute it) in each step, the alternative method used in this article exploits the monotonicity properties of the problem, and only needs the iteration matrix calculated once for all iterations of the fixed point method. We outline the abstract \emph{a priori} and \emph{a posteriori} analysis for the iteratively obtained solutions, and apply this to a finite element approximation of a second-order elliptic quasilinear boundary value problem.  We show both theoretically, as well as numerically, how the number of iterations of the fixed point method can be restricted in dependence of the mesh size, or of the polynomial degree, to obtain optimal convergence. Using the \emph{a posteriori} error analysis we also devise an adaptive algorithm for the generation of a sequence of Galerkin spaces (adaptively refined finite element meshes in the concrete example) to minimise the number of iterations on each space.
\end{abstract}

\keywords{Banach fixed point methods; finite element methods; monotone problems; quasilinear PDEs, nonlinear elliptic PDE; adaptive mesh refinement.}

\subjclass[2010]{65N30}

\maketitle

\section{Introduction}
In this paper we study Galerkin approximations of strictly monotone problems of the form:
\begin{equation}\label{eq:P}
\text{find }u\in X:\qquad A(u,v)=0\quad\forall v\in X.
\end{equation}
Here, $X$ is a real Hilbert space, with inner product denoted by~$(\cdot,\cdot)_X$ and induced norm~$\|x\|=\sqrt{(x,x)_X}$. Furthermore, $A:\,X\times X\to\mathbb{R}$ is a possibly nonlinear form such that, for any~$u\in X$, the mapping~$v\mapsto A(u,v)$ is linear and bounded. Moreover, we suppose that~$A$ satisfies
\begin{enumerate}[(P1)]
\item the \emph{strong monotonicity} property
\begin{equation}\label{eq:P1}\tag{P1}
A(u,u-v)-A(v,u-v)\ge c_0\|u-v\|_X^2\qquad\forall u,v\in X,
\end{equation}
for a constant~$c_0>0$, and
\item the \emph{Lipschitz continuity} condition
\begin{equation}\label{eq:P2}\tag{P2}
|A(u,w)-A(v,w)|\le L\|u-v\|_X\|w\|_X\qquad\forall u,v,w\in X,
\end{equation}
with a constant~$L>0$. 
\end{enumerate}
Under these assumptions, there exists a unique solution~$u\in X$ of the weak formulation~\eqref{eq:P}; see, e.g., \cite[Theorem~2.H]{Zeidler} or~\cite{necas}. In addition, the solution can be obtained as limit of a sequence $u^0, u^1, u^2,\ldots\in X$ resulting from the fixed point iteration
\begin{equation}\label{eq:it}
(u^n,v)_X=(u^{n-1},v)_X-\frac{c_0}{L^2} A(u^{n-1},v)\quad \forall v\in X,\qquad n\ge 1,
\end{equation}
for an arbitrary initial value~$u^0\in X$. Indeed, defining the contraction constant
\begin{equation}\label{eq:k}
k=\sqrt{1-\left(\frac{c_0}{L}\right)^2},
\end{equation}
there holds the \emph{a priori} bound
\begin{equation}\label{eq:apriori_exact}
\|u-u^n\|_X\le\frac{k^n}{1-k}\|u^0-u^1\|_X,\qquad n\ge 1,
\end{equation}
for the iteration~\eqref{eq:it}, i.e., $\|u-u^n\|_X\xrightarrow{n\to\infty} 0$. 

Restricting the iteration~\eqref{eq:it} to a finite dimensional linear subspace~$X_h\subseteq X$, leads to an iterative Galerkin approximation scheme for~\eqref{eq:P}. More precisely, we consider, for an initial guess~$u^0_h\in X_h$ and $n\ge 1$, the iteration
\begin{equation}\label{eq:FPI}
u_h^n\in X_h:\qquad (u^{n}_h,v)_X=(u_h^{n-1},v)_X-\frac{c_0}{L^2}A\left(u_h^{n-1},v\right)\qquad\forall v\in X_h,
\end{equation}
where~$c_0$ and~$L$ are the constants from~\eqref{eq:P1} and~\eqref{eq:P2} respectively. We emphasize that the problem of finding~$u^n_h$ from~$u^{n-1}_h$ in the iteration scheme~\eqref{eq:FPI} is \emph{linear} and uniquely solvable. Similarly as for~\eqref{eq:it} and~\eqref{eq:P}, the fixed point iteration~\eqref{eq:FPI} converges to the (unique) solution~$u_h\in X_h$ of the Galerkin formulation
\begin{equation}\label{eq:Galerkin}
A(u_h,v)=0\qquad\forall v\in X_h.
\end{equation}
Furthermore, we note the \emph{a priori} bound
\begin{equation}\label{eq:FPapriori}
\|u_h-u_h^{n}\|_{X}\le\frac{k^n}{1-k}\|u_h^{0}-u_h^{1}\|_{X},\qquad n\ge 1,
\end{equation}
analogous to~\eqref{eq:apriori_exact}.

In solving nonlinear differential equations numerically two approaches are commonly employed. Either the nonlinear problem under consideration is discretized by means of a suitable numerical scheme thereby resulting in a (finite-dimensional) nonlinear algebraic system, or the differential equation problem is approximated by a sequence of (locally) linearized problems which are discretized subsequently. The latter approach is attractive from both a computational as well as an analytical view point; indeed, working with a sequence of linear problems allows the application of linear solvers as well as the use of a linear numerical analysis framework (e.g., in deriving error estimates). In the context of fixed point linearizations~\eqref{eq:FPI} yet another benefit comes into play: solving for~$u^n_h$ from~$u_h^{n-1}$ involves setting up and inverting a mass matrix on the left-hand side of~\eqref{eq:FPI}. We emphasize that this matrix is the same for all iterations; hence, it only needs to be computed once (on a given Galerkin space).

The idea of approximating nonlinear problems within a \emph{linear} Galerkin framework has been applied in a variety of works. For example in the article~\cite{ChaillouSuri:07}, the authors have considered general linearizations of strongly monotone operators, and have derived computable estimators for the total error (consisting of the linearization error and the Galerkin approximation error), with identifiable components for each of the error sources. A more specific linearization approach for monotone problems, which is based on the Newton method, has been presented in~\cite{El-AlaouiErnVohralik:11}. In a related context linear finite element approximations resulting from adaptive Newton linearization techniques as applied to semilinear problems have been investigated in the papers~\cite{AmreinWihler:14,AmreinWihler:15}. Finally, we remark that the linear Galerkin approximation approach for nonlinear problems is not only employed for the purpose of obtaining linearized schemes, but also to address the issue of modelling errors in linearized models; see, e.g.~\cite{ChaillouSuri:06,Han:94}.

The aim of the current paper is to derive \emph{a priori} and \emph{a posteriori} error bounds for the Galerkin iteration method~\eqref{eq:FPI}. Our error estimates are expressed as the summation of the linearization error resulting from the fixed point formulation with the Galerkin approximation error. In particular, based on the \emph{a posteriori} error analysis, we will develop an adaptive solution procedure for the numerical solution of~\eqref{eq:P} that features an appropriate interplay between the fixed point iterations and possible Galerkin space enrichments (e.g., mesh refinements for finite elements); specifically, our scheme selects between these two options depending on whichever constitutes the dominant part of the total error. In this way, we aim to keep the number of fixed point iterations at a minimum in the sense that no unnecessary iterations are performed if they are not expected to contribute a substantial reduction of the error on the actual Galerkin space.

The outline of the rest of this article is as follows. In \secref{sc:abstract} we derive an abstract analysis for the fixed point iteration~\eqref{eq:FPI}, which includes the derivation of both \emph{a priori} and \emph{a posteriori} error bounds; in addition, we formulate an abstract adaptive procedure. The purpose of Section~\ref{sc:PDE} is the application of our abstract theory to the finite element approximation of a second-order elliptic quasi-linear elliptic diffusion reaction boundary value problem; in particular, we derive a fully adaptive algorithm based on suitable \emph{a posteriori} error estimates, and provide a series of numerical experiments. Finally, in \secref{sec:conclusion} we summarise the work presented and draw some conclusions.

\section{Abstract analysis}\label{sc:abstract}

\subsection{Fixed point Galerkin approximation}
As previously discussed, we let~$X_h$ be a finite dimensional linear subspace of a Hilbert space~$X$. Then, in order to approximate the solution $u\in X$ of~\eqref{eq:P}, we consider the Galerkin solution~$u_h\in X_h$ defined in~\eqref{eq:Galerkin}. For the purpose of calculating~$u_h$ we consider, in turn, the discrete and linear fixed point iteration scheme~\eqref{eq:FPI}. Evidently, this is equivalent to a linear algebraic system of equations. More precisely, using basis functions $\phi_i\in X_h$, for $i=1,\dots,m$, where $m=\dim(X_h)$ is the number of degrees of freedom, and letting
    \[
        u_h^{n} = \sum_{i=1}^{m} \phi_i \alpha_i^{n},
    \]
    for some unknown coefficients~$\bm\alpha^n=\{\alpha_i^{n}\}_{i=1}^m$,
    we obtain the linear system version of the fixed point iteration~\eqref{eq:FPI}:
    \[
        \sum_{i=1}^m \massmat_{ij} \alpha_i^{n} = \sum_{i=1}^m \massmat_{ij} \alpha_i^{n-1} - \frac{c_{0}}{L^2} \vec{A}(\vec{\alpha}^{n-1})_{j}, \qquad \textrm{for } j=1,\dots,m.
    \]
  Here, $\massmat$ is the iteration matrix defined by $\massmat_{ij} = \left(\phi_i,\phi_j\right)_X$, and $\vec{A}(\vec{\alpha}^{n-1})$, with
  \[
  \vec{A}(\vec{\alpha}^{n-1})_j=A\left(\sum_{i=1}^{m} \phi_i \alpha_i^{n-1},\phi_j\right),\qquad j=1,\ldots,m,
  \]
   is the vector form of $A(u_h,v)$.
    We can see that the iteration matrix $\massmat$ does {\em not depend on the iteration number $n$}; hence, it only needs calculating once for all iterations of the fixed point method (on a given Galerkin space).

\subsection{\emph{A priori} error bound}

Denoting by
\begin{equation}\label{eq:error}
e_h^n=u-u_h^n
\end{equation} 
the error between the solution~$u$ of~\eqref{eq:P} and~$u^n_h$ from~\eqref{eq:FPI}, we employ the triangle inequality and~\eqref{eq:FPapriori} to obtain
\[
\|e_h^n\|_X\le\|u-u_h\|_X+\frac{k^n}{1-k}\|u_h^{0}-u_h^{1}\|_{X},
\]
where~$u_h\in X_h$ is the Galerkin solution defined in~\eqref{eq:Galerkin}. Furthermore, employing the monotonicity property~\eqref{eq:P1} leads to
\begin{align*}
c_0\|u-u_h\|_X^2
&\le A(u,u-u_h)-A(u_h,u-u_h)\\
&=A(u,u-v)-A(u_h,u-v),
\end{align*}
for any~$v\in X_h$. Involving~\eqref{eq:P2}, we conclude
\[
c_0\|u-u_h\|_X^2\le L\|u-u_h\|_X\|u-v\|_X\qquad\forall v\in X_h,
\]
and thus
\[
\|u-u_h\|_X\le \frac{L}{c_0}\|u-v\|_X\qquad\forall v\in X_h.
\]

Combining these estimates we obtain the following result.

\begin{proposition}\label{pr:apriori}
For the error between the solution~$u\in X$ of~\eqref{eq:P} and its iterative Galerkin approximation~$u_h^n\in X_h$ from~\eqref{eq:FPI} there holds the \emph{a priori} error estimate
\[
\|u-u_h^{n}\|_X\le \frac{L}{c_0}\inf_{v\in X_h}\|u-v\|_X+\frac{k^n}{1-k}\|u_h^{0}-u_h^{1}\|_{X},
\]
for any~$n\ge 1$.
\end{proposition}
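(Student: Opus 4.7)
The plan is to split the total error through a triangle inequality and then estimate the two resulting pieces separately. Writing
$\|u - u_h^n\|_X \le \|u - u_h\|_X + \|u_h - u_h^n\|_X$,
where $u_h \in X_h$ is the Galerkin solution of~\eqref{eq:Galerkin}, the iteration-error term $\|u_h - u_h^n\|_X$ is already under control via the fixed-point contraction bound~\eqref{eq:FPapriori}, which yields exactly the second summand on the right-hand side of the claimed estimate. Only the genuine Galerkin error $\|u - u_h\|_X$ still needs to be bounded, for which I plan to establish a C\'ea-type quasi-best approximation inequality adapted to the monotone setting.

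To do this, I would start from strong monotonicity~\eqref{eq:P1} applied to the pair $(u, u_h)$, which gives $c_0 \|u - u_h\|_X^2 \le A(u, u - u_h) - A(u_h, u - u_h)$. For arbitrary $v \in X_h$, decompose $u - u_h = (u - v) + (v - u_h)$ and expand using linearity in the second argument of $A$. Since $v - u_h \in X_h \subseteq X$, the Galerkin-type identities $A(u, w) = 0$ for all $w \in X$ (by~\eqref{eq:P}) and $A(u_h, w) = 0$ for all $w \in X_h$ (by~\eqref{eq:Galerkin}) both annihilate the $v - u_h$ contribution, so the right-hand side collapses to $A(u, u - v) - A(u_h, u - v)$. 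Applying Lipschitz continuity~\eqref{eq:P2} to the common test function $u - v$ then yields $c_0 \|u - u_h\|_X^2 \le L \|u - u_h\|_X \|u - v\|_X$; dividing by $\|u - u_h\|_X$ (the case $u = u_h$ being trivial) and taking the infimum over $v \in X_h$ produces the desired Céa-type bound $\|u - u_h\|_X \le (L/c_0) \inf_{v \in X_h} \|u - v\|_X$.

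Adding the two pieces gives the claimed estimate. The main point requiring care, and the only mildly delicate step, is the Galerkin-orthogonality manipulation above: it rests crucially on the inclusion $X_h \subseteq X$, which is what makes both $A(u,\cdot)$ and $A(u_h,\cdot)$ vanish on the increment $v - u_h$ and thus allows $u - u_h$ to be replaced by $u - v$ on the right-hand side of the monotonicity inequality. Everything else reduces to a direct application of~\eqref{eq:P1}, \eqref{eq:P2}, and the already-established contraction estimate~\eqref{eq:FPapriori}.
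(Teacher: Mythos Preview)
Your proposal is correct and follows essentially the same approach as the paper: the paper likewise splits via the triangle inequality, invokes~\eqref{eq:FPapriori} for the iteration error, and derives the C\'ea-type bound by combining~\eqref{eq:P1} with the Galerkin orthogonality step $A(u,u-u_h)-A(u_h,u-u_h)=A(u,u-v)-A(u_h,u-v)$ before applying~\eqref{eq:P2}. Your treatment of the orthogonality step is in fact slightly more explicit than the paper's, but the argument is the same.
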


\subsection{\emph{A posteriori} error analysis}
\label{sc:aposteriori}

In order to derive an \emph{a posteriori} error analysis for~\eqref{eq:FPI} let us consider the auxiliary problem of finding~$\widetilde{u}^n\in X$ such that
\begin{equation}\label{eq:auxiliary}
(\widetilde{u}^n,v)_X=(u_h^{n-1},v)_X-\frac{c_0}{L^2}A(u_h^{n-1},v)\quad\forall v\in X,\qquad n\ge 1.
\end{equation}
We note that $\widetilde{u}^n\in X$ is a \emph{reconstruction} (cf.~\cite{MakridakisNochetto:03}) in the sense that~$u^n_h\in X_h$ from~\eqref{eq:FPI} is the Galerkin approximation of~$\widetilde{u}^n$. We assume that we can bound the error between the solution~$\widetilde{u}^n\in X$ of~\eqref{eq:auxiliary} and its Galerkin approximation~$u_h^n\in X_h$ in terms of an \emph{a posteriori} computable quantity~$\eta(u_h^n,X_h)$, i.e.,
\begin{equation}\label{eq:eta}
\|\widetilde{u}^n-u_h^n\|_X\le \eta(u_h^n,X_h),\qquad n\ge 1.
\end{equation}
Involving the monotonicity property~\eqref{eq:P1}, the error~$e_h^n$ from~\eqref{eq:error} satisfies
\[
c_0\|e_h^n\|_X^2\le A(u,e_h^n)-A(u_h^n,e_h^n)=-A(u_h^n,e_h^n).
\]
Furthermore, recalling~\eqref{eq:auxiliary}, we write
\begin{align*}
c_0\|e_h^n\|_X^2
&\le A(u_h^{n-1},e_h^n)-A(u_h^n,e_h^n)+\frac{L^2}{c_0}(\widetilde u^n-u_h^{n-1},e_h^n)_X\\
&=A(u_h^{n-1},e_h^n)-A(u_h^n,e_h^n)+\frac{L^2}{c_0}(u_h^n-u_h^{n-1},e_h^n)_X+\frac{L^2}{c_0}(\widetilde u^n-u_h^n,e_h^n)_X.
\end{align*}
Then, using~\eqref{eq:P2} and applying the Cauchy-Schwarz inequality, we infer that
\begin{align*}
c_0\|e_h^n\|_X^2
&\le L\|u_h^{n-1}-u_h^n\|_X\|e_h^n\|_X+\frac{L^2}{c_0}\|u_h^n-u_h^{n-1}\|_X\|e_h^n\|_X +\frac{L^2}{c_0}\|\widetilde u^n-u_h^n\|_X\|e_h^n\|_X,
\end{align*}
and dividing by~$c_0\|e_h^n\|_X$, we obtain
\[
\|e_h^n\|_X\le \frac{L}{c_0}\left(1+\frac{L}{c_0}\right)\|u_h^n-u_h^{n-1}\|_X+\frac{L^2}{c_0^2}\|\widetilde u^n-u_h^n\|_X.
\]
Hence, inserting~\eqref{eq:eta}, the following result can be deduced.

\begin{proposition}\label{pr:aposteriori}
For the error between the solution~$u\in X$ of~\eqref{eq:P} and its iterative Galerkin approximation~$u_h^n\in X_h$ from~\eqref{eq:FPI} there holds the \emph{a posteriori} error estimate
\begin{equation}\label{eq:aposteriori}
\|e_h^n\|_X\le \frac{L}{c_0}\left(1+\frac{L}{c_0}\right)\|u_h^n-u_h^{n-1}\|_X+\frac{L^2}{c_0^2}\eta(u_h^n,X_h),
\end{equation}
where~$\eta(u_h^n,X_h)$ is given in~\eqref{eq:eta}.
\end{proposition}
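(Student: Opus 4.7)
The plan is to combine strong monotonicity with the reconstruction $\widetilde u^n$ from \eqref{eq:auxiliary}, so that the residual $A(u_h^n,\emptyarg)$ can be split into a computable iteration increment plus the linear Galerkin error for the auxiliary problem (which is controlled by $\eta$).

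\textbf{Step 1 (reduce to the residual).} Apply \eqref{eq:P1} with the pair $(u,u_h^n)$:
\[
c_0\|e_h^n\|_X^2\le A(u,e_h^n)-A(u_h^n,e_h^n).
\]
Since $u$ solves \eqref{eq:P}, the first term vanishes, leaving $c_0\|e_h^n\|_X^2\le -A(u_h^n,e_h^n)$. This reduces the task to bounding $-A(u_h^n,e_h^n)$ by the quantities appearing on the right-hand side of \eqref{eq:aposteriori}.

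\textbf{Step 2 (insert the previous iterate and use Lipschitz).} Write
\[
-A(u_h^n,e_h^n)=\bigl[A(u_h^{n-1},e_h^n)-A(u_h^n,e_h^n)\bigr]-A(u_h^{n-1},e_h^n).
\]
The bracket is bounded by \eqref{eq:P2} as $L\|u_h^{n-1}-u_h^n\|_X\|e_h^n\|_X$. The remaining term $-A(u_h^{n-1},e_h^n)$ is not obviously computable, and handling it is the crux of the argument.

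\textbf{Step 3 (invoke the reconstruction).} The key observation is that \eqref{eq:auxiliary} holds for \emph{all} $v\in X$, and in particular the test function $e_h^n=u-u_h^n\in X$ (which generally does not lie in $X_h$) is admissible. Rearranging \eqref{eq:auxiliary} with $v=e_h^n$ yields
\[
-A(u_h^{n-1},e_h^n)=\frac{L^2}{c_0}\,(\widetilde u^n-u_h^{n-1},e_h^n)_X.
\]
Then I would split $\widetilde u^n-u_h^{n-1}=(\widetilde u^n-u_h^n)+(u_h^n-u_h^{n-1})$ to separate the linear Galerkin error of the auxiliary problem from the iteration increment, and apply Cauchy–Schwarz to both pieces.

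\textbf{Step 4 (collect and close).} Summing the estimates from Steps~2 and 3, dividing through by $c_0\|e_h^n\|_X$, and finally inserting the hypothesis $\|\widetilde u^n-u_h^n\|_X\le\eta(u_h^n,X_h)$ from \eqref{eq:eta} delivers exactly \eqref{eq:aposteriori}. The main obstacle, as noted, is recognizing that the reconstruction $\widetilde u^n$ is defined on all of $X$ precisely so that it can be tested against the (non-discrete) error $e_h^n$; once this is understood, the remaining manipulations are straightforward applications of \eqref{eq:P2} and the Cauchy–Schwarz inequality.
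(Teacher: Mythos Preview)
Your proposal is correct and follows essentially the same approach as the paper: both start from strong monotonicity to reduce to $-A(u_h^n,e_h^n)$, insert $A(u_h^{n-1},e_h^n)$, invoke the reconstruction identity \eqref{eq:auxiliary} tested against $e_h^n\in X$, split $\widetilde u^n-u_h^{n-1}$ into $(\widetilde u^n-u_h^n)+(u_h^n-u_h^{n-1})$, and finish with \eqref{eq:P2}, Cauchy--Schwarz, division by $c_0\|e_h^n\|_X$, and \eqref{eq:eta}. The only difference is cosmetic ordering of the additions and subtractions.
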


\subsection{An abstract adaptive algorithm}\label{sc:adaptive}
The \emph{a posteriori} error estimate~\eqref{eq:aposteriori} shows that the error~$e_h^n$ from~\eqref{eq:error} is controlled by two separate parts: a fixed point iteration error given by $Lc_0^{-1}\left(1+Lc_0^{-1}\right)\|u_h^n-u_h^{n-1}\|_X$, and a Galerkin approximation term~$L^2c_0^{-2}\eta(u_h^n,X_h)$. When performing the fixed point iteration~\eqref{eq:FPI} it is worth noting that once the fixed point error is less than the Galerkin error carrying out another iteration will not cause a substantial reduction of the error on the actual Galerkin space. Based on this observation we are able to develop an algorithm which generates a sequence of hierarchically enriched Galerkin spaces $X_{h,1}\subset X_{h,2}\subset X_{h,3}\subset\ldots\subset X$, with the aim of performing a minimal number of fixed point iterations at each enrichment step. Our algorithm will, therefore, feature an interplay between fixed point iterations and Galerkin space refinements.

On the Galerkin space~$X_{h,i}$, $i\ge 1$, we define the \emph{Galerkin approximation error} by
\[
    \mathcal{E}_{{\sf Galerkin},i}^{n} := \frac{L^2}{c_0^2}\eta(u_{h,i}^n,X_{h,i}),
\]
and the \emph{fixed point error}
\[
    \mathcal{E}_{{\sf FP},i}^{n} := \frac{L}{c_0}\left(1+\frac{L}{c_0}\right)\|u_{h,i}^n-u_{h,i}^{n-1}\|_X.
\]
This allows us to write the \emph{a posteriori} error bound~\eqref{eq:aposteriori} as
\[
    \|u-u_{h,i}^{n}\|_X \leq \mathcal{E}_{{\sf Galerkin},i}^{n} + \mathcal{E}_{{\sf FP},i}^{n}.
\]
Here, we denote by~$u_{h,i}^{n}\in X_{h,i}$ the Galerkin solution obtained after $n$ steps of the fixed point iteration \eqref{eq:FPI} on the current space~$X_{h,i}$; for~$i>0$, the initial guess $u_{h,i}^{0}\in X_{h,i}$ on the current Galerkin space~$X_{h,i}$ is obtained as the natural inclusion (or a projection) of the solution~$u_{h,i-1}^{n^\star}\in X_{h,i-1}$ of the last (namely, the~$n^\star$-th) iteration on the previous Galerkin space~$X_{h,i-1}$ to the space $X_{h,i}$. In particular, the fixed point iteration index~$n$ is reinitiated in each space enrichment step.

\begin{algorithm}\label{algo:refinement_abstract}
Choose an initial starting space~$X_{h,0}$, and an initial guess $u_{h,0}^{0}\in X_{h,0}$.
    \begin{algorithmic}
        	\For{$i \gets 0,1,2,\dots$}
	        \State $n \gets 0$
	        \Repeat
	            \State $n \gets n+1$
	            \State Perform a single fixed point iteration \eqref{eq:FPI} to calculate $u_{h,i}^{n}\in X_{h,i}$.
	        \Until{$\mathcal{E}_{{\sf FP},i}^{n} \leq \vartheta \mathcal{E}_{{\sf Galerkin},i}^{n}$}
	        \State Perform a hierarchical enrichment of~$X_{h,i}$ 	
	        \State\quad based on the error indicator~$\eta(u_{h,i}^n,X_{h,i})$ from~\eqref{eq:eta} 
	        \State\quad to obtain a new Galerkin space~$X_{h,i+1}\supset X_{h,i}$. 

	        \State Define $u_{h,i+1}^{0} \gets u_{h,i}^{n}$ (by inclusion~$X_{h,i+1}\hookleftarrow X_{h,i}$ or by projection)
	    \EndFor
	\end{algorithmic}
	Here, $\vartheta>0$ is a prescribed parameter. The algorithm is stopped if either the iteration number~$i$ reaches a given maximum, or if the right-hand side of~\eqref{eq:aposteriori} is found to be sufficiently small.
\end{algorithm}

\section{Application to quasilinear elliptic PDE}\label{sc:PDE}

\subsection{Problem formulation}
In this section, we focus on the numerical approximation of second-order elliptic diffusion reaction boundary value problems of the form
\begin{alignat}{2}
    -\nabla\cdot\left( \mu(\vec{x}, \vert \nabla u \vert)\nabla u\right) + f(\vec{x}, u) &= 0 &\qquad& \text{in } \Omega, \label{eqn:nonlinear PDE} \\
   u &= 0 & & \text{on } \Gamma, \label{eqn:nonlinear PDE BC}
\end{alignat}
where $\Omega$ is a bounded, open, polygonal domain in $\real^2$, with
boundary $\Gamma=\partial\Omega$. Here, we assume the following {\em monotonicity} conditions on the nonlinearities $\mu$ and $f$:
\begin{enumerate}
\item $\mu \in \Cp(\overline{\Omega} \times [0,\infty))$, and there exist constants
    $\alpha_1 \geq \alpha_2 > 0$ such that the following property is satisfied:
    \begin{equation}
        \alpha_2(t-s)\leq \mu(\vec{x},t)t-\mu(\vec{x},s)s \leq \alpha_1 (t-s), \qquad t\geq s\geq0, \quad \vec{x}\in\overline{\Omega}. \label{eqn:nonlinearity assump bound}
    \end{equation}
\item $f \in \Cp(\overline{\Omega} \times \real)$, and there exist constants
    $\beta_1 \geq \beta_2 \geq 0$ such that
    \begin{equation}
        \beta_2(t-s)\leq f(\vec{x},t)-f(\vec{x},s) \leq \beta_1 (t-s), \qquad t\geq s, \quad \vec{x}\in\overline{\Omega}. \label{eqn:forcing-func assump bound}
    \end{equation}
\end{enumerate}
From \cite[Lemma 2.1]{Liu1994} we note that, as $\mu$ satisfies
\eqref{eqn:nonlinearity assump bound}, for all vectors
$\vec{v},\vec{w}\in\real^2$ and all $\vec{x}\in\overline{\Omega}$, we have
\begin{align}
    \vert \mu(\vec{x},\vert\vec{v}\vert)\vec{v}-\mu(\vec{x},\vert\vec{w}\vert)\vec{w}\vert &\leq \alpha_1\vert\vec{v}-\vec{w}\vert, \label{eqn:liu result 1} \\
    \alpha_2 {\vert\vec{v}-\vec{w}\vert}^2 &\leq (\mu(\vec{x},\vert\vec{v}\vert)\vec{v}-\mu(\vec{x},\vert\vec{w}\vert)\vec{w})\cdot(\vec{v}-\vec{w}).  \label{eqn:liu result 2}
\end{align}
Similarly, as $f$ satisfies \eqref{eqn:forcing-func assump bound}, it holds that for all $s,t\in\real$ and all $\vec{x}\in\overline{\Omega}$,
\begin{align}
    \abs{ f(\vec{x},t)-f(\vec{x},s) } &\leq \beta_1 \abs{t-s}, \label{eqn:forcing-func bound 1} \\
    \beta_2 \abs{t-s}^2 &\leq (f(\vec{x},t)-f(\vec{x},s))(t-s).  \label{eqn:forcing-func bound 2}
\end{align}
For ease of notation we shall suppress the dependence of $\mu$ and $f$ on
$\vec{x}$ and write $\mu(t)$ and $f(u)$ instead of $\mu(\vec{x},t)$ and $f(\vec{x},u)$, respectively.

The weak formulation of the boundary value problem
\eqref{eqn:nonlinear PDE}--\eqref{eqn:nonlinear PDE BC} is to find $u\in X:=\Hp[0]{1}(\Omega)$ such that
\begin{equation}\label{eqn:weak formulation}
    A(u,v) = 0 \qquad\forall v\in\Hp[0]{1}(\Omega),
\end{equation}
where
\begin{equation}\label{eq:A}
    A(u,v) = \int_\Omega \left\{ \mu(\vert \nabla u \vert)\nabla u \cdot \nabla v + f(u)v \right\} \dx,\qquad  u, v \in \Hp[0]{1}(\Omega).
\end{equation}
Throughout this section, we use function spaces based on a polygonal
Lipschitz domain $D\subset\real^2$. We denote by $\Hp{k}(D)$ the Sobolev space of order $k\in\nat_0$ endowed with the norm $\norm{\emptyarg}_{\Hp{k}(D)}$. In the case that
$k=0$, we set $\Hp{k}(D)=\Lp(D)$ and denote the matching norm by $\norm{\emptyarg}_{\Lp(D)}$.
Furthermore, we define $\Hp[0]{1}(D)$ as the space of functions in $\Hp{1}(D)$ with zero trace on~$\partial D$.

Introducing the inner product
\begin{equation*}
    (u, v)_\Omega \coloneqq \int_\Omega \left\{ \alpha_2 \nabla u\cdot \nabla v + \beta_2 u v \right\} \dx,\qquad  u, v \in \Hp[0]{1}(\Omega),
\end{equation*}
where~$\alpha_2$ and~$\beta_2$ are the constants from~\eqref{eqn:nonlinearity assump bound} and~\eqref{eqn:forcing-func assump bound}, respectively, we note the induced norm
\[
    \tnorm{v}^2 \coloneqq \alpha_2 \norm{\nabla v}_{\Lp(\Omega)}^2 + \beta_2 \norm{v}_{\Lp(\Omega)}^2
\]
on~$\Hp[0]{1}(\Omega)$.

\subsection{Basic properties} Under the conditions~\eqref{eqn:nonlinearity assump bound} and~\eqref{eqn:forcing-func assump bound} we can show that the properties~\eqref{eq:P1} and~\eqref{eq:P2} are satisfied for~$X=H^1_0(\Omega)$, and~$\|\cdot\|_X:=\tnorm{\cdot}$. Indeed, noting the Poincar\'e inequality,
\begin{equation}\label{eq:CP}
\norm{v}_{\Lp(\Omega)}\le C_P\norm{\nabla v}_{\Lp(\Omega)}\qquad\forall v\in \Hp[0]{1}(\Omega),
\end{equation}
where~$C_P>0$ is a constant dependent only on~$\Omega$, there holds the ensuing result.

\begin{proposition}
    \label{pr:monotone}
    Provided that \eqref{eqn:nonlinearity assump bound} and~\eqref{eqn:forcing-func assump bound} hold, then the form~$A$ from~\eqref{eq:A} is both strongly monotone with constant~$c_0=1$ in~\eqref{eq:P1}, and Lipschitz continuous with constant
    \begin{equation}\label{eq:L}
1\le    L = \frac{\alpha_1+\max\left(\beta_1,\nicefrac{\alpha_1\beta_2}{\alpha_2}\right) C_P^2}{\alpha_2+\beta_2C_P^2}
    \end{equation}
in~\eqref{eq:P2}. Here, $C_P>0$ is the Poincar\'e constant from~\eqref{eq:CP}.
    \end{proposition}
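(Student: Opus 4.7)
The proof splits into three parts: (i) the strong monotonicity with constant $c_0=1$; (ii) the Lipschitz continuity with the claimed constant $L$; and (iii) the observation that $L\ge 1$.

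\textbf{Monotonicity.} Writing
\[
A(u,u-v)-A(v,u-v)=\int_{\Omega}\bigl(\mu(|\nabla u|)\nabla u-\mu(|\nabla v|)\nabla v\bigr)\cdot\nabla(u-v)\dx+\int_{\Omega}(f(u)-f(v))(u-v)\dx,
\]
I would apply the pointwise inequalities~\eqref{eqn:liu result 2} and~\eqref{eqn:forcing-func bound 2} (with $\vec v=\nabla u$, $\vec w=\nabla v$, and $t=u$, $s=v$) under the integrals. The two contributions integrate to exactly $\alpha_2\|\nabla(u-v)\|_{\Lp(\Omega)}^2+\beta_2\|u-v\|_{\Lp(\Omega)}^2=\tnorm{u-v}^2$, which is~\eqref{eq:P1} with $c_0=1$.

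\textbf{Lipschitz continuity.} Starting from
\[
|A(u,w)-A(v,w)|\le\int_{\Omega}|\mu(|\nabla u|)\nabla u-\mu(|\nabla v|)\nabla v||\nabla w|\dx+\int_{\Omega}|f(u)-f(v)||w|\dx,
\]
I would apply the pointwise bounds~\eqref{eqn:liu result 1} and~\eqref{eqn:forcing-func bound 1} and then the Cauchy--Schwarz inequality separately in each integral. Writing
\[
a=\|\nabla(u-v)\|_{\Lp(\Omega)},\quad b=\|u-v\|_{\Lp(\Omega)},\quad c=\|\nabla w\|_{\Lp(\Omega)},\quad d=\|w\|_{\Lp(\Omega)},
\]
this yields $|A(u,w)-A(v,w)|\le \alpha_1 ac+\beta_1 bd$. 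The delicate step is to relate this to $\tnorm{u-v}\tnorm{w}$. I would apply Cauchy--Schwarz in $\real^2$ to the pair $(\sqrt{\alpha_1}a,\sqrt{\beta_1}b)$ against $(\sqrt{\alpha_1}c,\sqrt{\beta_1}d)$ to obtain
\[
\alpha_1 ac+\beta_1 bd\le\sqrt{\alpha_1 a^2+\beta_1 b^2}\sqrt{\alpha_1 c^2+\beta_1 d^2}.
\]
Then for any $v\in\Hp[0]{1}(\Omega)$ the Poincar\'e inequality~\eqref{eq:CP} gives $b^2/a^2\le C_P^2$, so that setting $t=b^2/a^2$ (with the convention $t=0$ if $a=0$),
\[
\frac{\alpha_1\|\nabla v\|_{\Lp(\Omega)}^2+\beta_1\|v\|_{\Lp(\Omega)}^2}{\tnorm{v}^2}=\frac{\alpha_1+\beta_1 t}{\alpha_2+\beta_2 t}\le\max_{t\in[0,C_P^2]}\frac{\alpha_1+\beta_1 t}{\alpha_2+\beta_2 t}.
\]

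\textbf{Optimising the ratio.} This is the real workhorse. The map $t\mapsto(\alpha_1+\beta_1 t)/(\alpha_2+\beta_2 t)$ is monotone, with derivative proportional to $\beta_1\alpha_2-\alpha_1\beta_2$. Hence the maximum over $[0,C_P^2]$ is attained either at $t=0$ (value $\alpha_1/\alpha_2$, when $\beta_1\alpha_2\le\alpha_1\beta_2$) or at $t=C_P^2$ (value $(\alpha_1+\beta_1 C_P^2)/(\alpha_2+\beta_2 C_P^2)$, when $\beta_1\alpha_2\ge\alpha_1\beta_2$). A short computation shows both alternatives are captured by the single expression in~\eqref{eq:L}, by verifying that replacing $\beta_1$ with $\alpha_1\beta_2/\alpha_2$ in $(\alpha_1+\beta_1 C_P^2)/(\alpha_2+\beta_2 C_P^2)$ yields precisely $\alpha_1/\alpha_2$. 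Combining the two factors then gives $\alpha_1 ac+\beta_1 bd\le L\tnorm{u-v}\tnorm{w}$, i.e.\ \eqref{eq:P2}.

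\textbf{Lower bound $L\ge 1$.} This is immediate from the assumptions $\alpha_1\ge\alpha_2$ and $\beta_1\ge\beta_2$ (the latter ensuring $\max(\beta_1,\alpha_1\beta_2/\alpha_2)\ge\beta_2$), so that the numerator of $L$ is at least as large as the denominator. The main obstacle throughout is the weighted Cauchy--Schwarz step together with the ratio optimisation: a naive use of the triangle inequality followed by Poincar\'e produces only $L=(\alpha_1+\beta_1 C_P^2)/\alpha_2$, which is strictly worse than~\eqref{eq:L} whenever $\beta_2>0$.
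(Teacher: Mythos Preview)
Your proof is correct, and the monotonicity and $L\ge 1$ parts match the paper's argument exactly. For the Lipschitz bound, however, you take a genuinely different route. The paper, after reaching $|A(u,w)-A(v,w)|\le\alpha_1 ac+\beta_1 bd$, treats the case $\beta_2=0$ separately, and for $\beta_2>0$ introduces a free parameter $\delta\in[0,\beta_1]$, uses Poincar\'e to transfer a portion $\delta bd\le\delta C_P^2 ac$ into the gradient term, and then applies the crude estimate $C_1X_1Y_1+C_2X_2Y_2\le\max(C_1,C_2)\,\tnorm{u-v}\tnorm{w}$; the constant $L(\delta)=\max\bigl((\alpha_1+\delta C_P^2)/\alpha_2,\,(\beta_1-\delta)/\beta_2\bigr)$ is then minimised over~$\delta$. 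Your approach instead applies the weighted Cauchy--Schwarz inequality $\alpha_1 ac+\beta_1 bd\le\sqrt{\alpha_1 a^2+\beta_1 b^2}\sqrt{\alpha_1 c^2+\beta_1 d^2}$ first, and only afterwards invokes Poincar\'e to bound the ratio $(\alpha_1+\beta_1 t)/(\alpha_2+\beta_2 t)$ over $t\in[0,C_P^2]$, applied symmetrically to each factor. Both routes yield exactly the same constant~$L$, and your version has the virtue of avoiding the artificial parameter~$\delta$ and the separate treatment of $\beta_2=0$. The paper's parametrised splitting, on the other hand, makes the mechanism of ``trading'' reaction for diffusion via Poincar\'e more explicit.
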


\begin{proof}
    In order to show~\eqref{eq:P1} with~$c_0=1$, we apply~\eqref{eqn:liu result 2} and~\eqref{eqn:forcing-func bound 2} to arrive at
    \begin{align*}
        A(u,u-v)-A(v,u-v)
             &=\int_\Omega  \left( \mu(\abs{\nabla u})\nabla u - \mu(\abs{\nabla v}) \nabla v \right) \cdot \nabla (u-v) \dx \\
                &\quad+ \int_\Omega \left( f(u)-f(v) \right) (u-v) \dx\\
            &\geq \int_\Omega \left\{ \alpha_2 \abs{\nabla(u-v)}^2 + \beta_2 \abs{u-v}^2 \right \} \dx
            = \tnorm{u-v}^2.
    \end{align*}

Furthermore, to prove the Lipschitz continuity property~\eqref{eq:P2}, we recall \eqref{eqn:liu result 1} and \eqref{eqn:forcing-func bound 1}. In combination with the Cauchy-Schwarz inequality, this yields
    \begin{align*}
        \abs{ A(u,w)-A(v,w) } &\leq \int_\Omega \abs{ \mu(\abs{\nabla u})\nabla u - \mu(\abs{\nabla v}) \nabla v }\abs{\nabla w} \dx
                + \int_\Omega \abs{ f(u)-f(v) } \abs{w} \dx \\
            &\leq \alpha_1 \norm{\nabla(u-v)}_{\Lp(\Omega)} \norm{\nabla w}_{\Lp(\Omega)} + \beta_1 \norm{u-v}_{\Lp(\Omega)} \norm{w}_{\Lp(\Omega)}.
    \end{align*}
    We first consider the case when $\beta_2 = 0$; then, noting that $\tnorm{v} =\sqrt{\alpha_2} \norm{\nabla v}_{\Lp(\Omega)}$,
    we apply the Poincar\'e inequality~\eqref{eq:CP} to observe that
    \begin{align*}
        \abs{ A(u,w)-A(v,w) } &\leq \alpha_1 \norm{\nabla(u-v)}_{\Lp(\Omega)} \norm{\nabla w}_{\Lp(\Omega)} + \beta_1 C_P^2 \norm{\nabla(u-v)}_{\Lp(\Omega)} \norm{\nabla w}_{\Lp(\Omega)} \\
            &= \left( \frac{\alpha_1+\beta_1 C_P^2}{\alpha_2} \right) \tnorm{u-v}\tnorm{w}.
    \end{align*}
    When $\beta_2 > 0$ we introduce a constant $0\le\delta\le\beta_1$ and apply the Poincar\'e inequality~\eqref{eq:CP}, to get that
    \begin{align*}
        \abs{ A(u,w)-A(v,w) }
            &\leq
                \alpha_1 \norm{\nabla(u-v)}_{\Lp(\Omega)} \norm{\nabla w}_{\Lp(\Omega)} + (\beta_1 - \delta) \norm{u-v}_{\Lp(\Omega)} \norm{w}_{\Lp(\Omega)} \\
                    &\quad+ \delta C_P^2 \norm{\nabla(u-v)}_{\Lp(\Omega)} \norm{\nabla w}_{\Lp(\Omega)}
             \\
            &=
                \left( \frac{\alpha_1+\delta C_P^2}{\alpha_2} \right) \sqrt{\alpha_2} \norm{\nabla(u-v)}_{\Lp(\Omega)} \sqrt{\alpha_2} \norm{\nabla w}_{\Lp(\Omega)} \\
                    &\quad+ \left( \frac{\beta_1 - \delta}{\beta_2} \right) \sqrt{\beta_2} \norm{u-v}_{\Lp(\Omega)} \sqrt{\beta_2} \norm{w}_{\Lp(\Omega)}.
    \end{align*}
Using the Cauchy-Schwarz inequality yields
\[
\abs{ A(u,w)-A(v,w) }\le L(\delta)\tnorm{u-v}\tnorm{w},
\]
where
\[
L(\delta)=\max\left(\frac{\alpha_1+\delta C_P^2}{\alpha_2},\frac{\beta_1 - \delta}{\beta_2}\right).
\]
Minimizing~$L(\delta)$ within the given range, $0\le\delta\le\beta_1$, depends on the constants~$\alpha_1,\alpha_2,\beta_1,\beta_2$. More precisely, if~$\nicefrac{\alpha_1}{\alpha_2}\ge\nicefrac{\beta_1}{\beta_2}$ then~$\delta^\star=0$ is the optimal choice, and there holds~$L(0)=\nicefrac{\alpha_1}{\alpha_2}\ge1$; otherwise, we select
\[
\delta^\star := \frac{\beta_1 \alpha_2 - \beta_2 \alpha_1}{\alpha_2 + \beta_2 C_P^2}\in(0,\beta_1),
\]
and thereby obtain
\[
L(\delta^\star)=\frac{\alpha_1+\beta_1 C_P^2}{\alpha_2+\beta_2C_P^2}\ge 1.
\]
This completes the proof.
\end{proof}

\begin{remark}
Incidentally, referring to, e.g., \cite[Theorem~3.3.23]{necas} or~\cite[Theorem~2.H]{Zeidler}, the above result, Proposition~\ref{pr:monotone}, guarantees the existence of a unique solution~$u\in \Hp[0]{1}(\Omega)$ of~\eqref{eqn:weak formulation}. 
\end{remark}

\begin{remark}
We note that the fixed point iteration~\eqref{eq:it} for the current problem~\eqref{eqn:nonlinear PDE}--\eqref{eqn:nonlinear PDE BC} reads in strong form as
\begin{alignat*}{2}
-\alpha_2\Delta u^n+\beta_2u^n
&=-\alpha_2\Delta u^{n-1}+\beta_2 u^{n-1}-L^{-2}\left(-\nabla\cdot(\mu(|\nabla u^{n-1}|)\nabla u^{n-1})+f(u^{n-1})\right)&\qquad&\text{in }\Omega\\
u^n&=0&&\text{on }\partial\Omega,
\end{alignat*}
in $\Hp{-1}(\Omega)$, the dual space of~$H^1_0(\Omega)$, for~$n\ge 1$.
\end{remark}

\subsection{Finite element discretization}
\label{sec:fe_form}
In order to solve~\eqref{eqn:weak formulation} by a fixed point Galerkin iteration, we will use a finite element framework.

\subsubsection{Meshes and spaces}
We consider regular and shape-regular meshes $\mesh$ that partition the domain $\Omega\subset\real^2$ into open disjoint triangles and/or parallelograms $K$, such that $\overline{\Omega} = \bigcup_{K\in\mesh} \overline{K}$. We denote by
$h_K$ the elemental diameter of $K\in\mesh$, and let $h=\max_{K\in\mesh} h_K$.

With this notation, for a fixed polynomial degree~$p\ge 1$, we are ready to introduce the finite element space
\begin{equation}\label{eq:FEMspace}
    \cgspace \coloneqq \{ v\in\Hp[0]{1}(\Omega) : v\vert_K\in\mathcal{S}_{p}(K)\ \forall K\in\mesh \} \subset \Hp[0]{1}(\Omega),
\end{equation}
where
\[
    \mathcal{S}_{p}(K) =
    \begin{cases}
        \mathcal{P}_{p}(K) &\text{if } K \text{ is a triangle}, \\
        \mathcal{Q}_{p}(K) &\text{if } K \text{ is a parallelogram}.
    \end{cases}
\]
Here, $\mathcal{P}_p(K)$ denotes the space of polynomials of total order at most $p$ on $K$, while
$\mathcal{Q}_p(K)$ is the tensored space of polynomials of order at most $p$ in each variable on $K$.

%
\subsubsection{Iterative Galerkin FEM}
\label{sec:fp_iteration}
Based on the class of spaces~$\cgspace$ introduced before, we can now introduce the finite element formulation for a {\em linear} fixed point formulation~\eqref{eq:FPI} of~\eqref{eqn:weak formulation}: Given
an initial guess $u_h^{0}\in\cgspace$, we iterate for~$n=1,2,3\ldots$,
\begin{equation}
    \label{eqn:fixed_point_itn}
    \left(u_h^{n}, v_h\right)_\Omega = \left(u_h^{n-1}, v_h\right)_\Omega - L^{-2}A(u_h^{n-1}, v_h)\qquad\forall v_h\in\cgspace.
\end{equation}

\begin{remark}\label{rm:epsilon}
Recalling~\eqref{eq:k} the contraction constant for this iteration is given by
 \begin{equation*}
         k = \sqrt{1 - L^{-2}} < 1.
    \end{equation*}
Here, we point out that, in the singularly perturbed case when~$\alpha_2\approx\alpha_1=\mathcal{O}(\varepsilon)$, for~$0<\varepsilon\ll 1$, and $\beta_2\approx\beta_1=\mathcal{O}(1)$, the contraction factor~$k$ does not deteriorate to~$1$. Indeed, this follows from the fact that the Lipschitz constant~$L$ from Proposition~\ref{pr:monotone} remains robustly bounded from~$0$ in this situation.
\end{remark}

\subsection{Error Analysis}

We will now apply the abstract analysis derived in Section~\ref{sc:abstract} to the iterative Galerkin method~\eqref{eqn:fixed_point_itn} for the numerical approximation of~\eqref{eqn:nonlinear PDE}--\eqref{eqn:nonlinear PDE BC}.

\subsubsection{\emph{A priori} error bound}
\label{sec:a_priori}
Using our abstract \emph{a priori} error analysis from Proposition~\ref{pr:apriori} and applying suitable $hp$-approximation results (see, e.g., \cite{BaSu87}), we obtain a bound for the error between the numerical solution $u_h^{n}$ obtained at the $n$-th step of the
fixed point iteration \eqref{eqn:fixed_point_itn} and of the exact solution $u$ from \eqref{eqn:weak formulation}. For simplicity of presentation we assume a (quasi-) uniform diameter $h>0$ of all elements.

\begin{theorem}
    \label{theorem:a priori fp}
    Let $u \in \Hp{\kappa+1}(\Omega)\cap\Hp[0]{1}(\Omega)$, with $\kappa\geq 1$, be the solution to the weak formulation~\eqref{eqn:weak formulation}, $u_h^{0}\in\cgspace$ any initial guess,
    and $u_h^{n}\in\cgspace$ the numerical
    solution after $n$ steps of the fixed point iteration \eqref{eqn:fixed_point_itn}; then, for $n\geq 1$, there holds the {\em a priori} error estimate
    \begin{equation}\label{eq:apriori}
        \tnorm*{u-u_h^{n}} \leq CL \frac{h^{\min(\kappa,p)}}{p^\kappa} \norm{u}_{\Hp{\kappa+1}(\Omega)}
            + 2L^2 \left( 1 - L^{-2} \right)^{\nicefrac{n}{2}} \tnorm*{u_h^{0}-u_h^{1}},
    \end{equation}
    where $C>0$ is a constant independent of $h$, $p$, and~$L$ from~\eqref{eq:L}, but depends on~$\alpha_2$ and~$\beta_2$ from~\eqref{eqn:nonlinearity assump bound} and~\eqref{eqn:forcing-func assump bound}, respectively.
\end{theorem}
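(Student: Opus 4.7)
The plan is to feed the abstract a priori bound of Proposition~\ref{pr:apriori} into the concrete setting $X=H_0^1(\Omega)$, $\|\cdot\|_X=\tnorm{\cdot}$, $X_h=\cgspace$, using the constants $c_0=1$ and $L$ identified in Proposition~\ref{pr:monotone}. Substituting, one immediately gets
\[
\tnorm*{u-u_h^{n}}\le L\inf_{v_h\in\cgspace}\tnorm*{u-v_h}+\frac{k^{n}}{1-k}\tnorm*{u_h^{0}-u_h^{1}},
\]
and the work is then split into bounding the approximation-theoretic infimum on the left and the algebraic prefactor in the fixed point term on the right.

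For the infimum, I would compare $\tnorm{\cdot}$ to the standard $H^1$-norm. Since $\tnorm{w}^2=\alpha_2\|\nabla w\|_{\Lp(\Omega)}^2+\beta_2\|w\|_{\Lp(\Omega)}^2\le\max(\alpha_2,\beta_2)\|w\|_{\Hp{1}(\Omega)}^2$, the two norms are equivalent with constants depending only on $\alpha_2,\beta_2$. I would then invoke the classical $hp$-version approximation estimate of Babu\v{s}ka--Suri~\cite{BaSu87}: there exists an interpolant $\mathcal I_{hp}u\in\cgspace$ with
\[
\|u-\mathcal I_{hp}u\|_{\Hp{1}(\Omega)}\le C\,\frac{h^{\min(\kappa,p)}}{p^{\kappa}}\|u\|_{\Hp{\kappa+1}(\Omega)},
\]
where $C>0$ is independent of $h$ and $p$. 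Using this interpolant in the infimum yields the first term on the right-hand side of~\eqref{eq:apriori} with the stated dependence on $\alpha_2,\beta_2$ absorbed into the constant $C$.

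For the second term, the key small calculation is to rewrite $1-k$. Since $c_0=1$, we have $k=\sqrt{1-L^{-2}}$, and multiplying by the conjugate
\[
1-k=1-\sqrt{1-L^{-2}}=\frac{L^{-2}}{1+\sqrt{1-L^{-2}}},
\]
so that $\frac{1}{1-k}=L^{2}(1+k)\le 2L^{2}$. Combined with $k^{n}=(1-L^{-2})^{n/2}$, this gives $\frac{k^{n}}{1-k}\le 2L^{2}(1-L^{-2})^{n/2}$, which is exactly the prefactor in~\eqref{eq:apriori}.

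I do not expect any real obstacle here; the estimate is a direct specialisation of Proposition~\ref{pr:apriori} together with a standard $hp$-interpolation bound and a one-line identity for $1-k$. The only place where one must be a little careful is to track that the constant multiplying the approximation term is indeed independent of $L$ (all $L$-dependence is exposed explicitly), which follows because the $hp$-estimate is applied in the fixed, $L$-independent norm $\tnorm{\cdot}$ before the factor $L=L/c_0$ coming from Proposition~\ref{pr:apriori} is inserted.
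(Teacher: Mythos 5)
Your argument is correct and follows exactly the route the paper intends (and only sketches): specialise Proposition~\ref{pr:apriori} with $c_0=1$ and $L$ from Proposition~\ref{pr:monotone}, bound the infimum via the Babu\v{s}ka--Suri $hp$-approximation estimate together with $\tnorm{w}\le\max(\alpha_2,\beta_2)^{\nicefrac12}\norm{w}_{\Hp{1}(\Omega)}$, and use $k^n(1-k)^{-1}=L^2(1+k)k^n\le 2L^2(1-L^{-2})^{\nicefrac{n}{2}}$. Your handling of the $L$-independence of $C$ and of the conjugate identity for $1-k$ is precisely what is needed, so there is nothing to add.
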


\begin{remark}\label{rm:h-p-FEM}
From the above Theorem~\ref{theorem:a priori fp} it is possible to predict the (approximate) number of fixed point iterations required to obtain an optimal convergence rate in the linear finite element iteration~\eqref{eqn:fixed_point_itn}. To this end, we ask for the second term on the right-hand side of~\eqref{eq:apriori} to converge at least at the rate of the first term. In order to discuss the resulting convergence behaviour of the numerical solution~$u^{n}_h$ obtained from~\eqref{eqn:fixed_point_itn}, we distinguish two different cases:
\begin{itemize}
\item {\em $h$-FEM:} We fix a low polynomial degree~$p$ and investigate the convergence properties with respect to the mesh size~$h$ as $h\to 0$ (mesh refinement). Here, for~$\kappa\ge p$, we need
\[
(1-L^{-2})^{\nicefrac{n}{2}}=\order{h^p},
\]
and hence, $n=\order{|\log h|}$ as~$h\to 0$.
\item {\em $p$-FEM:} We now fix the mesh, and suppose that the solution of~\eqref{eqn:nonlinear PDE}--\eqref{eqn:nonlinear PDE BC} is analytic. Then, as~$p\to\infty$, it can be shown that the FEM converges exponentially (see~\cite{Schwab1998}), i.e., the error bound~\eqref{eq:apriori} reads
\[
\tnorm*{u-u_h^{n}}\le\order{e^{-bp}}+ 2 L^2 \left( 1 - L^{-2} \right)^{\nicefrac{n}{2}} \tnorm*{u_h^{0}-u_h^{1}},
\]
for some constant~$b>0$. Again, balancing the two terms on the right, we require~$n=\order{p}$ iterations as~$p\to\infty$.
\end{itemize}
We will test these observations with some numerical experiments in Section~\ref{sec:numerics}.
\end{remark}

\subsubsection{{\em A posteriori} error analysis}
In this section we obtain an \emph{a posteriori} error bound for the error between the numerical solution $u_h^{n}$ obtained at the $n$-th step of the
fixed point iteration \eqref{eqn:fixed_point_itn} and of the exact solution $u$ obtained from \eqref{eqn:nonlinear PDE}--\eqref{eqn:nonlinear PDE BC}. According to our analysis in Section~\ref{sc:aposteriori} the key is to derive an \emph{a posteriori} error estimate between the reconstruction~$\widetilde{u}^n\in\Hp[0]{1}(\Omega)$ from~\eqref{eq:auxiliary} and the iterative Galerkin solution~$u^n_h$ from~\eqref{eq:FPI} (i.e.,~$u_h^n$ from~\eqref{eqn:fixed_point_itn} in the present context); see~\eqref{eq:eta}.

To establish such a bound, we begin with a quasi-interpolation result.

\begin{lemma}\label{lm:int}
Consider a finite element mesh~$\mesh[h]$, and a corresponding FEM space~$\cgspace$ as in~\eqref{eq:FEMspace}. Moreover, let~$\pi:\, \Hp[0]{1}(\Omega)\to\cgspace$ be the
Cl\'ement interpolation operator~\cite{Clement:75}. Then,
\begin{equation*}
        \sum_{K\in\mesh}
            \left( \gamma_K^{-1} \norm*{v-\pi v}_{\Lp(K)}^2
                + \alpha_2\norm*{\nabla\left(v-\pi v\right)}_{\Lp(K)}^2
                +\frac12\alpha_2^{\nicefrac12}\gamma_K^{-\nicefrac12}\norm{v-\pi v}^2_{\Lp(\partial K)}
                \right)
            \leq C_I^2 \tnorm*{ v }^2
\end{equation*}
for all~$v\in\Hp[0]{1}(\Omega)$, with a constant~$C_I>0$ independent of the local element sizes, and
\begin{equation}\label{eq:gamma}
\gamma_K=\begin{cases}
\min\left(\alpha_2^{-1}h_K^{2},\beta_2^{-1}\right)&\text{if }\beta_2\neq 0,\\
h_K^{2}\alpha_2^{-1}&\text{otherwise},
\end{cases}
\end{equation}
for any~$K\in\mesh[h]$. Here, $\alpha_2$ and~$\beta_2$ are the constants from~\eqref{eqn:nonlinearity assump bound} and~\eqref{eqn:forcing-func assump bound}, respectively.
\end{lemma}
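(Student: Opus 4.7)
The plan is to assemble the stated estimate from the standard local Clément bounds
$\|v-\pi v\|_{L_2(K)}\le C h_K \|\nabla v\|_{L_2(\omega_K)}$, $\|\nabla(v-\pi v)\|_{L_2(K)}\le C\|\nabla v\|_{L_2(\omega_K)}$, the $L_2$-stability $\|v-\pi v\|_{L_2(K)}\le C\|v\|_{L_2(\omega_K)}$, and the (scaled) trace inequality $\|w\|_{L_2(\partial K)}^2\le C(h_K^{-1}\|w\|_{L_2(K)}^2+h_K\|\nabla w\|_{L_2(K)}^2)$, where $\omega_K$ denotes the Clément patch around $K$. The key bookkeeping is that $\gamma_K^{-1}$ is a maximum of $\alpha_2 h_K^{-2}$ and $\beta_2$, so I need to match the ``right'' stability estimate to the active branch of the max.

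First I would dispose of the case $\beta_2=0$: then $\gamma_K^{-1}=\alpha_2 h_K^{-2}$ and $\tnorm{v}^2=\alpha_2\|\nabla v\|_{L_2(\Omega)}^2$, so each of the three local terms reduces, after inserting the gradient-type Clément bound (and the trace inequality), to a multiple of $\alpha_2\|\nabla v\|_{L_2(\omega_K)}^2$. Summing and invoking the bounded overlap of the patches $\{\omega_K\}_{K\in\mesh[h]}$ yields the claim at once.

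For $\beta_2\neq 0$, I would perform a case split per element. If $\alpha_2 h_K^{-2}\ge\beta_2$, then $\gamma_K=\alpha_2^{-1}h_K^2$ and I use the $\mathcal O(h_K)$ and $\mathcal O(1)$ gradient Clément bounds exactly as in the $\beta_2=0$ case, producing local contributions of the form $C\alpha_2\|\nabla v\|_{L_2(\omega_K)}^2\le C\tnorm{v|_{\omega_K}}^2$. If instead $\beta_2>\alpha_2 h_K^{-2}$, then $\gamma_K=\beta_2^{-1}$ and I would switch to the $L_2$-stability of $\pi$: the bulk term becomes $\beta_2\|v-\pi v\|_{L_2(K)}^2\le C\beta_2\|v\|_{L_2(\omega_K)}^2$, the $\alpha_2$-gradient term is unchanged, and for the boundary term the trace inequality together with the bookkeeping inequalities
\[
\alpha_2^{1/2}\gamma_K^{-1/2}h_K\le\alpha_2,\qquad \alpha_2^{1/2}\gamma_K^{-1/2}h_K^{-1}\le\gamma_K^{-1},
\]
(both of which follow directly from the defining case $h_K<\sqrt{\alpha_2/\beta_2}$) reduces the boundary contribution to a combination of the bulk terms already handled. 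Adding these local estimates gives $C\tnorm{v|_{\omega_K}}^2$ on each $K$.

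The final step is to sum over $K\in\mesh[h]$ and invoke the uniformly bounded overlap of the Clément patches (depending only on shape regularity) to control $\sum_K\tnorm{v|_{\omega_K}}^2$ by a multiple of $\tnorm{v}^2$, absorbing everything into the single constant $C_I$. The principal obstacle I foresee is not conceptual but combinatorial: verifying the two scaling inequalities above cleanly in the $\beta_2\neq 0$ branch so that the boundary term can be dominated without producing a spurious $h_K^{-1}$ factor; the case split on whether $\alpha_2 h_K^{-2}\ge\beta_2$ is the natural device for this.
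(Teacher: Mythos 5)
Your handling of the volume terms, of the $\beta_2=0$ case, and of the diffusion-dominated branch is correct and essentially what the paper does (local Cl\'ement bounds plus $L_2$-stability, then summation over patches with bounded overlap). The gap is in the reaction-dominated branch. There you have $\beta_2>\alpha_2h_K^{-2}$, i.e.\ $\gamma_K=\beta_2^{-1}$ and $h_K>\sqrt{\nicefrac{\alpha_2}{\beta_2}}$ --- note this is the \emph{opposite} orientation to the condition ``$h_K<\sqrt{\nicefrac{\alpha_2}{\beta_2}}$'' from which you claim your bookkeeping inequalities follow. Your second inequality, $\alpha_2^{\nicefrac12}\gamma_K^{-\nicefrac12}h_K^{-1}\le\gamma_K^{-1}$, is equivalent to $\sqrt{\alpha_2\gamma_K}\le h_K$ and indeed holds in every branch (it is just $\gamma_K\le\alpha_2^{-1}h_K^2$). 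But your first inequality, $\alpha_2^{\nicefrac12}\gamma_K^{-\nicefrac12}h_K\le\alpha_2$, is equivalent to $h_K\le\sqrt{\alpha_2\gamma_K}=\sqrt{\nicefrac{\alpha_2}{\beta_2}}$ and is therefore false precisely in the branch where you invoke it. Concretely, after your additive trace inequality the boundary contribution contains the term $\alpha_2^{\nicefrac12}\gamma_K^{-\nicefrac12}h_K\,\norm{\nabla(v-\pi v)}_{\Lp(K)}^2=\sqrt{\alpha_2\beta_2}\,h_K\,\norm{\nabla(v-\pi v)}_{\Lp(K)}^2$, whose coefficient exceeds $\alpha_2$ by the factor $\sqrt{\nicefrac{\beta_2}{\alpha_2}}\,h_K>1$, which is unbounded on coarse elements in the singularly perturbed regime $\alpha_2\ll\beta_2$ --- exactly the regime in which the paper insists on robustness (cf.\ Remark~\ref{rm:epsilon}). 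So this piece cannot be absorbed into $\alpha_2\norm{\nabla(v-\pi v)}_{\Lp(K)}^2$, and the argument does not close.

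The defect is only in the balance parameter of your trace/Young step: $h_K$ is the wrong splitting weight; the natural one is $\sqrt{\alpha_2\gamma_K}$. Either restate the trace inequality as $\norm{w}_{\Lp(\partial K)}^2\le C\bigl(h_K^{-1}\norm{w}_{\Lp(K)}^2+(\alpha_2\gamma_K)^{-\nicefrac12}\norm{w}_{\Lp(K)}^2+(\alpha_2\gamma_K)^{\nicefrac12}\norm{\nabla w}_{\Lp(K)}^2\bigr)$, or do what the paper does: keep the multiplicative form $\norm{w}_{\Lp(\partial K)}^2\le C\bigl(h_K^{-1}\norm{w}_{\Lp(K)}^2+\norm{w}_{\Lp(K)}\norm{\nabla w}_{\Lp(K)}\bigr)$ with $w=v-\pi v$, and insert the \emph{weighted} $L_2$-bound $\norm{v-\pi v}_{\Lp(K)}^2\le C\gamma_K\bigl(\alpha_2\norm{\nabla v}_{\Lp(\omega_K)}^2+\beta_2\norm{v}_{\Lp(\omega_K)}^2\bigr)$ (which is just the combination of your two stability estimates and makes any per-element case split unnecessary) together with $\norm{\nabla(v-\pi v)}_{\Lp(K)}\le C\norm{\nabla v}_{\Lp(\omega_K)}$. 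The resulting prefactor is $h_K^{-1}\gamma_K+\gamma_K^{\nicefrac12}\alpha_2^{-\nicefrac12}\le2\gamma_K^{\nicefrac12}\alpha_2^{-\nicefrac12}$, using only the valid inequality $\sqrt{\alpha_2\gamma_K}\le h_K$, and the weighted boundary term is then bounded by $C\bigl(\alpha_2\norm{\nabla v}_{\Lp(\omega_K)}^2+\beta_2\norm{v}_{\Lp(\omega_K)}^2\bigr)$ uniformly in both branches; summation over $K$ and bounded patch overlap finish the proof as you describe.
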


\begin{proof}
Let~$v\in\Hp[0]{1}(\Omega)$. We begin by recalling the following well-known approximation properties of the Cl\'ement interpolant:
\begin{align*}
h_K^{-2} \norm*{v-\pi v}_{\Lp(K)}^2
                + \norm*{\nabla\left(v-\pi v\right)}_{\Lp(K)}^2
            &\leq C \norm*{ \nabla v }_{\Lp(\omega_K)}^2,\\
            \norm*{v-\pi v}_{\Lp(K)}^2
                   &\leq C \norm*{ v }_{\Lp(\omega_K)}^2,
\end{align*}
for any~$K\in\mesh[h]$, with a constant~$C>0$ independent of the local element sizes and of~$v$; for~$K\in\mesh[h]$ we denote by~$\omega_K$ the patch of all elements in~$\mesh[h]$ adjacent to~$K$. In particular, following the approach in~\cite{Ve98}, this implies that
\begin{align*}
\norm*{v-\pi v}_{\Lp(K)}^2\le C\alpha_2^{-1}h_K^2\left(\alpha_2\norm{\nabla v}^2_{\Lp(\omega_K)}+\beta_2\norm*{ v }_{\Lp(\omega_K)}^2\right),
\end{align*}
and that, if~$\beta_2\neq 0$, then
\begin{align*}
\norm*{v-\pi v}_{\Lp(K)}^2\le C\beta_2^{-1} \left(\alpha_2\norm{\nabla v}^2_{\Lp(\omega_K)}+\beta_2\norm*{ v }_{\Lp(\omega_K)}^2\right).
\end{align*}
Therefore,
\begin{equation*}
\norm*{v-\pi v}_{\Lp(K)}^2\le C\gamma_K \left(\alpha_2\norm{\nabla v}^2_{\Lp(\omega_K)}+\beta_2\norm*{ v }_{\Lp(\omega_K)}^2\right),
\end{equation*}
and so
\begin{equation}\label{eq:aux10}
\gamma_K^{-1} \norm*{v-\pi v}_{\Lp(K)}^2
                + \alpha_2\norm*{\nabla\left(v-\pi v\right)}_{\Lp(K)}^2
            \leq C\left(\alpha_2\norm{\nabla v}^2_{\Lp(\omega_K)}+\beta_2\norm*{ v }_{\Lp(\omega_K)}^2\right).
\end{equation}

Moreover, using the multiplicative trace inequality, that is,
\[
\norm{v-\pi v}^2_{\Lp(\partial K)}\le C\left(h_K^{-1}\norm{v-\pi v}_{\Lp(K)}^2+\norm{v-\pi v}_{\Lp(K)}\norm{\nabla v-\pi v}_{\Lp(K)}\right)\qquad \forall K\in\mesh,
\]
we infer that
\begin{align*}
\norm{v-\pi v}^2_{\Lp(\partial K)}
&\le C\left(h_K^{-1}\gamma_K+\gamma_K^{\nicefrac12}\alpha_2^{-\nicefrac12}\right)\left(\alpha_2\norm{\nabla v}^2_{\Lp(\omega_K)}+\beta_2\norm*{ v }_{\Lp(\omega_K)}^2\right).
\end{align*}
Observing that
\[
h_K^{-1}\gamma_K+\gamma_K^{\nicefrac12}\alpha_2^{-\nicefrac12}
= \left(h_K^{-1}\alpha_2^{\nicefrac12}\gamma_K^{\nicefrac12}+1\right)\gamma_K^{\nicefrac12}\alpha_2^{-\nicefrac12}\le 2\gamma_K^{\nicefrac12}\alpha_2^{-\nicefrac12},
\]
we now arrive at
\begin{equation}\label{eq:aux11}
\alpha_2^{\nicefrac12}\gamma_K^{-\nicefrac12}\norm{v-\pi v}^2_{\Lp(\partial K)}\le C\left(\alpha_2\norm{\nabla v}^2_{\Lp(\omega_K)}+\beta_2\norm*{ v }_{\Lp(\omega_K)}^2\right).
\end{equation}

Finally, combining~\eqref{eq:aux10} and~\eqref{eq:aux11}, and summation over all~$K\in\mesh[h]$ concludes the argument.
\end{proof}

In order to formulate the following result, we consider a series of meshes, $\{\mesh[h,i]\}_{i\geq 0}$; for each mesh $\mesh[h,i]$ we denote the finite element space on that mesh as $\cgspacei$.

\begin{theorem}
    \label{thm:a posteriori}
    Let $u \in \Hp[0]{1}(\Omega)$ be the exact solution to the boundary value problem
    \eqref{eqn:nonlinear PDE}--\eqref{eqn:nonlinear PDE BC}, and $\mesh[h,0]$ be an initial mesh with initial guess
    $u_{h,0}^{0}\in\cgspacei[0]$. Moreover, denote by~$\mesh[h,i]$ the mesh after $i$~mesh refinements, and let
    $u_{h,i}^{n}\in\cgspacei$ be the FEM solution obtained after $n$ steps of the fixed point iteration \eqref{eqn:fixed_point_itn} 
    on~$\mesh[h,i]$. Here, the initial guess $u_{h,i}^{0}\in\cgspacei$ on the current mesh~$\mesh[h,i]$, $i> 0$, is obtained
    as an (appropriate) projection  of the solution~$u_{h,i-1}^{n^\star}\in\cgspacei[i-1]$ of the last ($n^\star$-th) iteration on the mesh~$\mesh[h,i-1]$
    to the space $\cgspacei$. Then, for $n\geq 1$, there holds the {\em a posteriori} error estimate
    \begin{equation*}
        \tnorm*{u-u_{h,i}^{n}} \leq C_I \Bigg(\sum_{K\in\mesh[h,i]} \eta_K^2\Bigg)^{\nicefrac{1}{2}}
            + L\left(1+L\right) \tnorm*{u_{h,i}^{n}-u_{h,i}^{n-1}},
    \end{equation*}
    where $C_I$ is the constant from Lemma~\ref{lm:int}, and
    \begin{align*}
    \eta_K^2 &= \gamma_K \norm*{f\left(u_{h,i}^{n-1}\right) -\nabla\cdot\left(\mu\left(\abs*{\nabla u_{h,i}^{n-1}}\right)\nabla u_{h,i}^{n-1}\right) +L^2\F\left( u_{h,i}^{n}-u_{h,i}^{n-1}\right) }_{\Lp(K)}^2 \\
            &\quad+ \frac12\alpha_2^{-\nicefrac12}{\gamma_K}^{\nicefrac12}\norm*{ \jmp{ \mu\left(\abs*{\nabla u_{h,i}^{n-1}}\right)\nabla u_{h,i}^{n-1} + L^2 \alpha_2\nabla\left(u_{h,i}^{n}-u_{h,i}^{n-1}\right) } }_{\Lp(\partial K\setminus\Gamma)}^2,
    \end{align*}
    for any $K\in\mesh[h,i]$ and~$n\ge 1$, are \emph{local error indicators}. Here, $\gamma_K$, $K\in\mesh[h,i]$, is defined in~\eqref{eq:gamma} and 
    \[
    \F(v) = -\alpha_2\Delta v + \beta_2 v.
    \]
    Moreover, for an edge $ e\subset\partial K^+ \cap \partial K^{-}$ between two neighbouring elements~$K^\pm\in\mesh[h,i]$, we signify by $\jmp{\bm v} \!\big\vert_e = \bm v^{+}\vert_e\cdot\vec{n}_{K^{+}} + \bm v^{-}\vert_e\cdot \vec{n}_{K^{-}}$ the jump of a (vector-valued) function~$\bm v$ along~$e$, where $\bm v^\pm$ denote the traces of the function $\bm v$ on the edge $e$ taken from the
    interior of $K^\pm$, respectively, and $\vec{n}_{K^{\pm}}$ are the unit outward normal vectors on $\partial K^\pm$, respectively.
\end{theorem}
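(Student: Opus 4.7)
The plan is to apply Proposition~\ref{pr:aposteriori}. Since Proposition~\ref{pr:monotone} gives $c_0 = 1$, that result reads
\[
\tnorm*{u - u_{h,i}^n} \le L(1+L)\tnorm*{u_{h,i}^n - u_{h,i}^{n-1}} + L^2\,\eta(u_{h,i}^n,\cgspacei),
\]
so the task reduces to producing a reconstruction bound of the form $\tnorm*{\widetilde u^n - u_{h,i}^n} \le L^{-2} C_I\bigl(\sum_K \eta_K^2\bigr)^{1/2}$; the factors $L^2$ and $L^{-2}$ then cancel to yield exactly the estimate in the statement. This is a standard residual-based a posteriori argument applied to the \emph{linear} elliptic problem satisfied by the reconstruction error.

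Set $w := \widetilde u^n - u_{h,i}^n$. Subtracting~\eqref{eqn:fixed_point_itn} from~\eqref{eq:auxiliary} yields the Galerkin orthogonality $(w, v_h)_\Omega = 0$ for all $v_h \in \cgspacei$. For arbitrary $v \in \Hp[0]{1}(\Omega)$, I would then use~\eqref{eq:auxiliary} to write
\[
-L^2 (w, v)_\Omega = L^2 (u_{h,i}^n - u_{h,i}^{n-1}, v)_\Omega + A(u_{h,i}^{n-1}, v),
\]
and perform elementwise integration by parts on both summands. The volume contributions combine into the element residual
$R_K = f(u_{h,i}^{n-1}) - \nabla\cdot(\mu(|\nabla u_{h,i}^{n-1}|)\nabla u_{h,i}^{n-1}) + L^2 \F(u_{h,i}^n - u_{h,i}^{n-1})$
featured in $\eta_K$, while assembling the boundary terms per interior edge (and using $v|_\Gamma = 0$ to discard the contributions on $\Gamma$) produces precisely the jump expression in $\eta_K$. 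Crucially, the $L^2$ weights end up \emph{inside} these norms rather than as an external prefactor.

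Next, invoking Galerkin orthogonality I replace $v$ by $v - \pi v$, where $\pi$ is the Cl\'ement interpolant of Lemma~\ref{lm:int}. Applying Cauchy--Schwarz elementwise on the volume integrals (pairing with weights $\gamma_K^{1/2}$ vs.\ $\gamma_K^{-1/2}$) and edgewise on the jump terms (distributing each interior edge equally between its two adjacent elements, which explains the $\tfrac12$ in $\eta_K^2$, and pairing with $(\tfrac12\alpha_2^{-1/2}\gamma_K^{1/2})^{1/2}$ vs.\ $(\tfrac12\alpha_2^{1/2}\gamma_K^{-1/2})^{1/2}$), followed by a discrete Cauchy--Schwarz, I obtain
\[
L^2 \abs{(w, v)_\Omega} \le \Bigg(\sum_{K\in\mesh[h,i]} \eta_K^2\Bigg)^{\!\nicefrac12}\Bigg(\sum_{K\in\mesh[h,i]} \gamma_K^{-1}\norm{v - \pi v}_{\Lp(K)}^2 + \tfrac12\alpha_2^{\nicefrac12}\gamma_K^{-\nicefrac12}\norm{v - \pi v}_{\Lp(\partial K)}^2\Bigg)^{\!\nicefrac12}.
\]
Lemma~\ref{lm:int} bounds the second factor by $C_I \tnorm*{v}$ (the nonnegative $\alpha_2\norm{\nabla(v-\pi v)}_{\Lp(K)}^2$ term in the lemma is simply dropped). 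Choosing $v = w$ and cancelling one copy of $\tnorm*{w}$ yields the reconstruction bound, and inserting it into Proposition~\ref{pr:aposteriori} finishes the proof.

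The main obstacle is the careful bookkeeping of the $L^2$ factor: it must be absorbed inside the residual and jump norms, so that after multiplication by $L^2$ in Proposition~\ref{pr:aposteriori} no additional power of $L$ pollutes the Galerkin term. The definition of $\gamma_K$ in~\eqref{eq:gamma}, with its intrinsic distinction between $\beta_2 = 0$ and $\beta_2 > 0$, has been tailored precisely so that the weights emerging from Cauchy--Schwarz match those controlled in Lemma~\ref{lm:int}; consequently, no further case split is needed at this point.
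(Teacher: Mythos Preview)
Your proposal is correct and follows essentially the same route as the paper: you bound the reconstruction error $\tnorm*{\widetilde u^n-u_{h,i}^n}$ by rewriting $L^2(w,v)_\Omega$ via~\eqref{eq:auxiliary}, integrating by parts elementwise to reveal the residual and jump terms, inserting $v-\pi v$ through Galerkin orthogonality, applying Cauchy--Schwarz with the $\gamma_K$-weights, and invoking Lemma~\ref{lm:int} before specializing to $v=w$. The only cosmetic difference is that the paper works directly with $v=w$ from the start rather than keeping $v$ generic; the $L^{-2}$/$L^2$ cancellation and the handling of the edge factor~$\tfrac12$ are identical.
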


\begin{proof}
Recalling our abstract result, Proposition~\ref{pr:aposteriori}, it is sufficient to derive a quantity~$\eta(u_{h,i}^n,\cgspacei)$ such that
\[
\tnorm{\widetilde{u}_i^n-u_{h,i}^n}\le\eta(u_{h,i}^n,\cgspacei);
\]
cf.~\eqref{eq:eta}. The reconstruction~$\widetilde{u}_i^n\in\Hp[0]{1}(\Omega)$ fulfills
\[
(\widetilde{u}_i^n,v)_\Omega=(u_{h,i}^{n-1},v)_\Omega-L^{-2}A(u_{h,i}^{n-1},v)\qquad\forall v\in\Hp[0]{1}(\Omega)\supset\cgspacei,
\]
see~\eqref{eq:auxiliary}, where~$u_{h,i}^n\in\cgspacei$ is just the Galerkin approximation of $\widetilde{u}_i^n$ (cp.~\eqref{eqn:fixed_point_itn}).

Define the error~$\wt e_{h,i}^{n} = \wt u_i^n-u_{h,i}^{n}$, and let~$v_{h,i}=\pi \wt e_{h,i}^{n} \in\cgspacei$, where~$\pi$ is the interpolation operator from Lemma~\ref{lm:int}. We notice that there holds
\begin{align*}
        \tnorm{\wt e^n_{h,i}}^2 
        &=(\wt u_i^n,\wt e^n_{h,i})_\Omega-(u^n_{h,i},\wt e^n_{h,i})_\Omega\\
        &=(\wt u_i^n,\wt e^n_{h,i}-v_{h,i})_\Omega-(u^n_{h,i},\wt e^n_{h,i}-v_{h,i})_\Omega\\
        &= - L^{-2}A(u_{h,i}^{n-1}, \wt e_{h,i}^{n} - v_{h,i}) - \left( u_{h,i}^{n} - u_{h,i}^{n-1}, \wt e_{h,i}^{n} - v_{h,i} \right)_\Omega.
\end{align*}
Integration by parts elementwise leads to
    \begin{align*}
        L^2\tnorm{\wt e^n_{h,i}}^2
        &=-\sum_{K\in\mesh[h,i]}\int_K \left(\mu\left(\abs*{\nabla(u_{h,i}^{n-1})}\right)\nabla u_{h,i}^{n-1} + L^2\alpha_2\nabla\left(u_{h,i}^{n}-u_{h,i}^{n-1}\right)\right)\cdot\nabla\left(\wt e_{h,i}^{n}-v_{h,i}\right)\dx \\
            &\quad -\sum_{K\in\mesh[h,i]}\int_K \left(f\left(u_{h,i}^{n-1}\right) +L^2\beta_2\left(u_{h,i}^{n}-u_{h,i}^{n-1}\right)\right)\left(\wt e_{h,i}^{n}-v_{h,i}\right)\dx\\
        &= \sum_{K\in\mesh[h,i]}\int_K \nabla\cdot\left(\mu\left(\abs*{\nabla u_{h,i}^{n-1}}\right)\nabla u_{h,i}^{n-1} + L^2\alpha_2 \nabla\left(u_{h,i}^{n}-u_{h,i}^{n-1}\right) \right) \left(\wt e_{h,i}^{n}-v_{h,i}\right) \dx \\
            &\quad -\sum_{K\in\mesh[h,i]}\int_K \left(f\left(u_{h,i}^{n-1}\right) +L^2\beta_2\left(u_{h,i}^{n}-u_{h,i}^{n-1}\right)\right)\left(\wt e_{h,i}^{n}-v_{h,i}\right)\dx\\
            & \quad -\sum_{K\in\mesh[h,i]} \int_{\partial K\setminus\Gamma} \left(\mu\left(\abs*{\nabla u_{h,i}^{n-1}}\right)\nabla u_{h,i}^{n-1} + L^2 \alpha_2 \nabla\left( u_{h,i}^{n}-u_{h,i}^{n-1} \right) \right) \cdot \vec{n}_K \left(\wt e_{h,i}^{n}-v_{h,i}\right) \ds.
            \end{align*}
A few elementary calculations show that
\begin{align*}
\sum_{K\in\mesh[h,i]}& \int_{\partial K\setminus\Gamma} \left(\mu\left(\abs*{\nabla u_{h,i}^{n-1}}\right)\nabla u_{h,i}^{n-1} + L^2 \alpha_2 \nabla\left( u_{h,i}^{n}-u_{h,i}^{n-1} \right) \right)\cdot \vec{n}_K \left(\wt e_{h,i}^{n}-v_{h,i}\right) \ds\\
&=\frac12\sum_{K\in\mesh[h,i]} \int_{\partial K\setminus\Gamma} \jmp{\mu\left(\abs*{\nabla u_{h,i}^{n-1}}\right)\nabla u_{h,i}^{n-1} + L^2 \alpha_2 \nabla\left( u_{h,i}^{n}-u_{h,i}^{n-1} \right)} \left(\wt e_{h,i}^{n}-v_{h,i}\right) \ds,
\end{align*}
and thus, using the Cauchy-Schwarz inequality, implies
    \begin{align*}
      L^2\tnorm{\wt e^n_{h,i}}^2
       &\leq \sum_{K\in\mesh[h,i]} \gamma^{\nicefrac12}_K \norm*{f\left(u_{h,i}^{n-1}\right) -\nabla\cdot\left(\mu\left(\abs*{\nabla u_{h,i}^{n-1}}\right)\nabla u_{h,i}^{n-1}\right) +L^2\F\left( u_{h,i}^{n}-u_{h,i}^{n-1}\right) }_{\Lp(K)} \\
            &\qquad\qquad \times\gamma_K^{-\nicefrac12} \norm*{\wt e_{h,i}^{n}-v_{h,i}}_{\Lp(K)} \\
            & \quad + \frac{1}{2}\sum_{K\in\mesh[h,i]}  \alpha_2^{-\nicefrac14}\gamma_K^{\nicefrac{1}{4}}\norm*{\jmp{ \mu\left(\abs*{\nabla u_{h,i}^{n-1}}\right)\nabla u_{h,i}^{n-1} + L^2 \alpha_2\nabla \left( u_{h,i}^{n}-u_{h,i}^{n-1} \right) }}_{\Lp(\partial K\setminus\Gamma)}\\&\qquad\qquad \times\alpha_2^{\nicefrac14}\gamma_K^{-\nicefrac{1}{4}} \norm*{\wt e_{h,i}^{n}-v_{h,i}}_{\Lp(\partial K \setminus \Gamma)} \\
            &\leq \Bigg( \sum_{K\in\mesh[h,i]} \eta_K^2 \Bigg)^{\nicefrac{1}{2}} \Bigg( \sum_{K\in\mesh[h,i]} \left( \gamma_K^{-1} \norm*{\wt e_{h,i}^{n}-v_{h,i}}_{\Lp(K)}^2 + \frac{\alpha_2^{\nicefrac12}}{2\gamma_K^{\nicefrac12}} \norm*{\wt e_{h,i}^{n}-v_{h,i}}_{\Lp(\partial K \setminus \Gamma)}^2 \right) \Bigg)^{\nicefrac{1}{2}}.
    \end{align*}
Therefore, we infer that, employing Lemma~\ref{lm:int},
  \begin{equation*}
         L^2\tnorm{\wt e^n_{h,i}}^2\leq C_I \Bigg( \sum_{K\in\mesh[h,i]} \eta_K^2 \Bigg)^{\nicefrac{1}{2}} \tnorm*{\wt e_{h,i}^{n}},
    \end{equation*}
    which implies
    \[
    \tnorm{\wt e^n_{h,i}}\leq C_IL^{-2} \Bigg( \sum_{K\in\mesh[h,i]} \eta_K^2 \Bigg)^{\nicefrac{1}{2}}=:\eta(u^n_{h,i},\cgspacei).
    \]
Inserting this bound into~\eqref{eq:aposteriori} with~$c_0=1$ (cp.~Proposition~\ref{pr:monotone}) completes the proof.
\end{proof}
\subsection{Adaptive refinement algorithm}
Proceeding along the lines of Section~\ref{sc:adaptive}, we notice that the \emph{a posteriori} error bound from \thref{thm:a posteriori} controls the error in terms of two contributions: The \emph{finite element error}, defined as
\[
    \mathcal{E}_{{\sf FEM},i}^{n} = \Bigg( \sum_{K\in\mesh[h,i]} \eta_K^2 \Bigg)^{\nicefrac{1}{2}},
\]
and the \emph{fixed point error}
\[
    \mathcal{E}_{{\sf FP},i}^{n} = L(1+L) \tnorm*{u_{h,i}^{n}-u_{h,i}^{n-1}}.
\]
This allows us to write the error bound as
\[
    \tnorm*{u-u_{h,i}^{n}} \leq C_I \mathcal{E}_{{\sf FEM},i}^{n} + \mathcal{E}_{{\sf FP},i}^{n}.
\]
Based on this bound we can cast the abstract adaptive Algorithm~\ref{algo:refinement_abstract} into the fixed point Galerkin iteration~\eqref{eqn:fixed_point_itn} for the solution of~\eqref{eqn:nonlinear PDE}--\eqref{eqn:nonlinear PDE BC}.

\begin{algorithm}
    \label{algo:refinement}
    Choose an initial starting mesh $\mesh[h,0]$, and an initial guess
    $u_{h,0}^{0}\in\cgspacei[0]$ in the associated finite element space $\cgspacei[0]$ (of fixed polynomial degree~$p\ge 1$).
    \begin{algorithmic}
        	\For{$i \gets 0,1,2,\dots$}
	        \State $n \gets 0$
	        \Repeat
	            \State $n \gets n+1$
	            \State Perform a single fixed point iteration \eqref{eqn:fixed_point_itn} to calculate $u_{h,i}^{n}$.
	        \Until{$\mathcal{E}_{{\sf FP},i}^{n} \leq \vartheta \mathcal{E}_{{\sf FEM},i}^{n}$}
	        \State Perform mesh refinement (and/or derefinement) on $\mesh[h,i]$ 
	        \State based on the error indicators~$\eta_K$ from Theorem~\ref{thm:a posteriori} 
	        \State together with a suitable marking strategy in order to obtain $\mesh[h,i+1]$.
	        \State $u_{h,i+1}^{0} \gets \pi_{i,i+1} u_{h,i}^{n}$
	    \EndFor
	\end{algorithmic}
	Here, $\pi_{i,i+1}$ is some projection from $\cgspacei$ to $\cgspacei[i+1]$ (for instance, the $(.,.)_\Omega$-projection),
	and $\vartheta>0$ is a (prescribed) parameter.
\end{algorithm}

\subsection{Numerical experiments}
\label{sec:numerics}
In this section we perform a series of numerical experiments to validate the \emph{a priori} and \emph{a posteriori}
error bounds for the fixed point iteration~\eqref{eqn:fixed_point_itn} from \thref{theorem:a priori fp} and \thref{thm:a posteriori}, respectively.

    \subsubsection{Validation of Remark~\ref{rm:h-p-FEM}}
    We consider the domain $\Omega= (0,1)^2 \subset \real^2$ with nonlinearity
    \[
        \mu(\abs{\nabla u}) = 2 + \frac{1}{1 + \abs{\nabla u}^2},
    \]
    and select $f$ independent of $u$ such that the analytical solution to
    \eqref{eqn:nonlinear PDE}--\eqref{eqn:nonlinear PDE BC} is given by
    \[
        u(x, y) = x(1-x)y(1-y)(1-2y)\e^{-20(2x-1)^2}.
    \]
    We note that $\beta_1=\beta_2=0$, $\alpha_1=3$ and $\alpha_2=\nicefrac{15}{8}$.

    \begin{figure}[t]
        \centering
        \subfloat[]{\label{fig:priori:p}\includegraphics[width=0.49\textwidth]{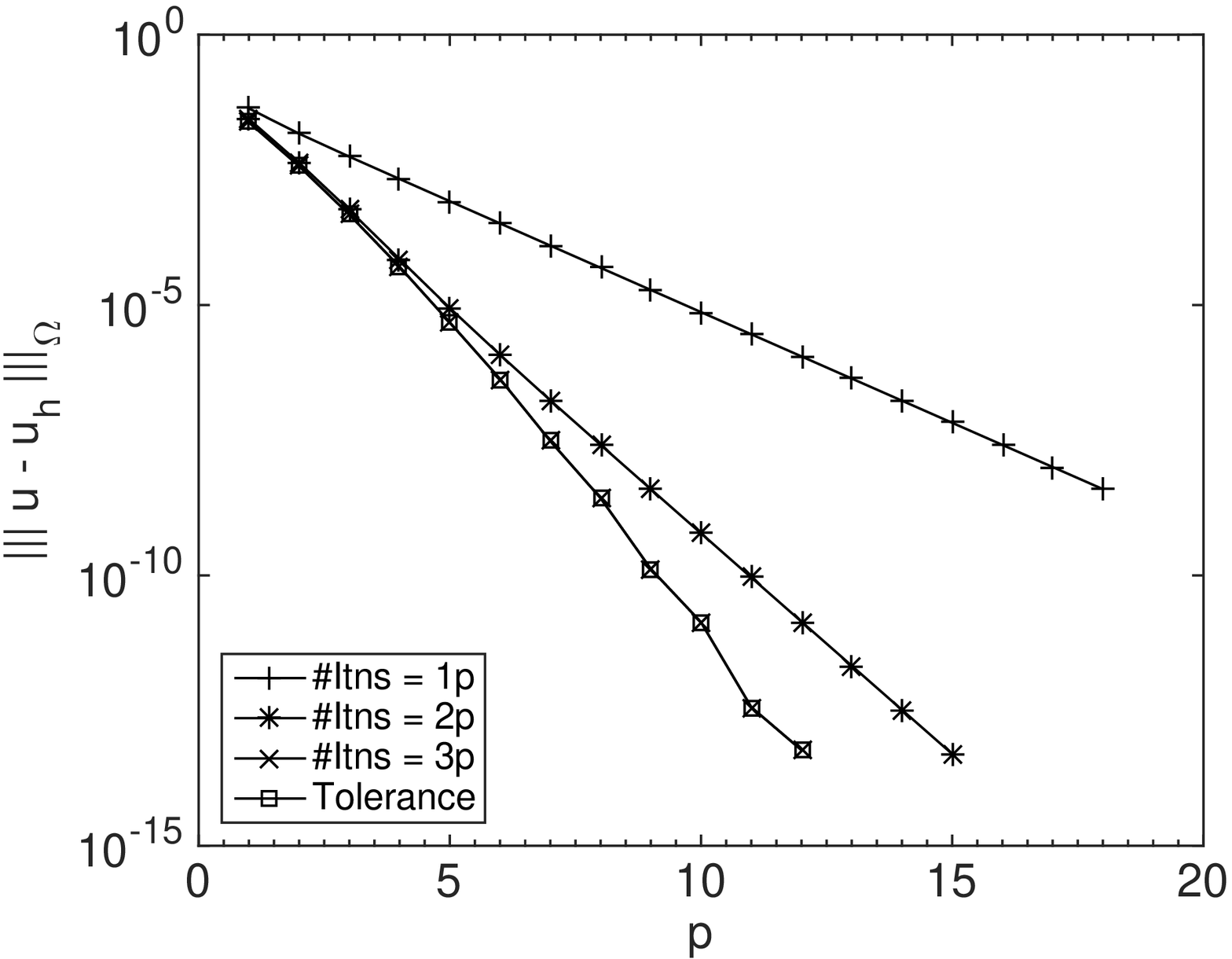}}
        \subfloat[]{\label{fig:priori:h}\includegraphics[width=0.49\textwidth]{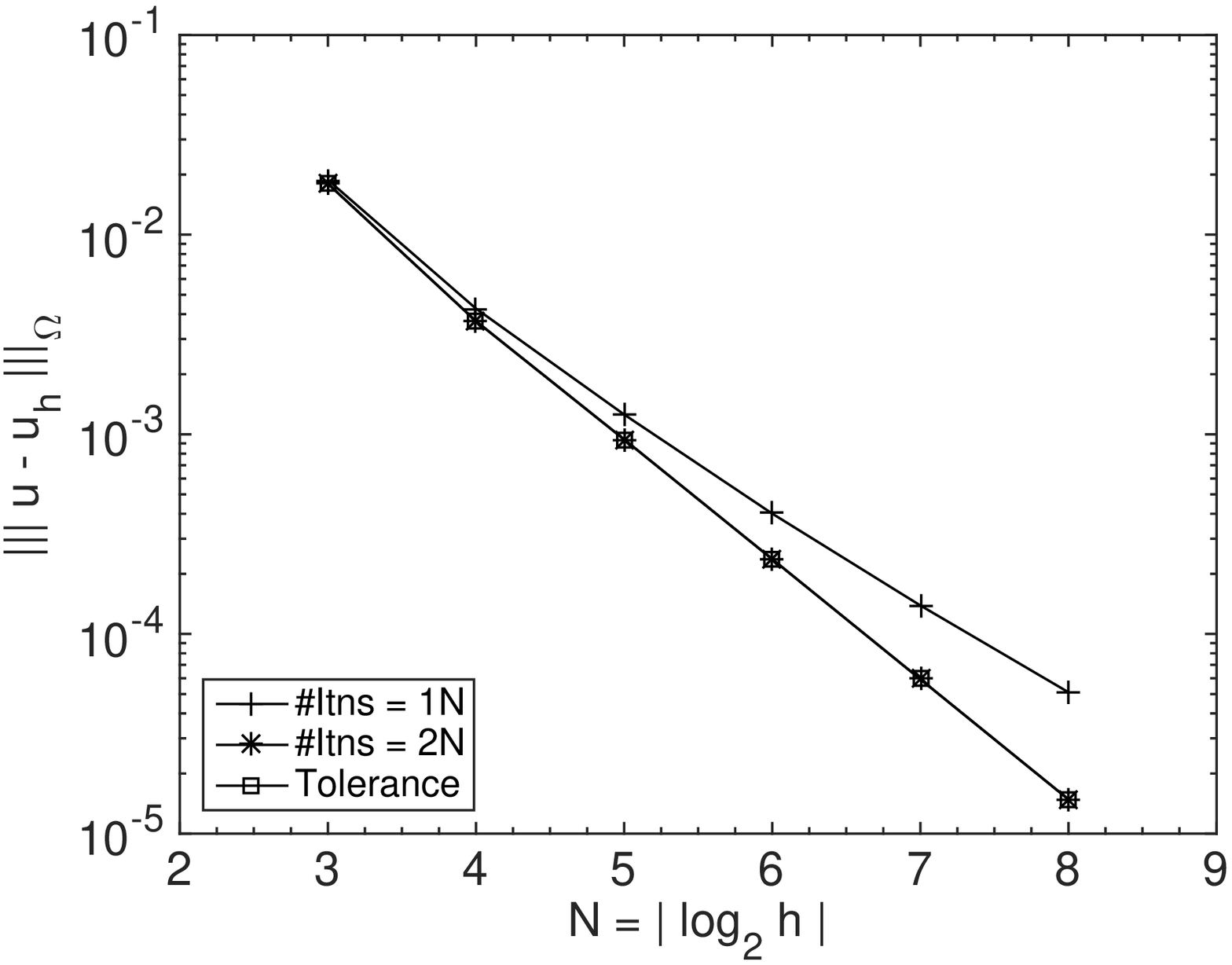}}
        \caption{Convergence with restricted number of fixed point iterations
            with uniform \protect\subref{fig:priori:p} $p$-refinement and \protect\subref{fig:priori:h} $h$-refinement.}
        \label{fig:priori}
    \end{figure}

    Firstly, we consider the case when the mesh is fixed as a $16 \times 16$ uniform square mesh of quadrilaterals
    and perform uniform refinement of the polynomial degree $p$ from an initial guess~$u^0_{h,0}\equiv 0$. In this situation
    we restrict the number of iterations of the fixed point iteration to $C_p\cdot p$, for $C_p=1,2,3$ and plot in
    \subfigref{fig:priori}{p} the
    error $\tnorm{u-u_h}$ against the polynomial degree $p$. For comparison, we also perform the same experiment
    continuing the fixed point iteration until the residual $A(u_h^{n},v)$ is below a given tolerance~($10^{-14}$) and,
    hence, the approximation is close to the best possible FEM approximation for the mesh. We clearly observe that by restricting
    the number of iterations we obtain exponential convergence of the error, and when $C_p=3$ we gain the same
    convergence rate as allowing the iteration to continue until a tolerance is reached. Hence only performing
    the iteration $3p$ times gives an optimal convergence rate in the given example.

    Secondly, we consider a fixed polynomial degree $p=2$ and perform $h$ refinement to generate a sequence of $2^N \times 2^N$ uniform
    square meshes of quadrilaterals, for $N=3,\dots,8$. We again perform both a restriction of the number of iterations
    of the fixed point method to $C_N\cdot N$, for $C_N=1,2$, as well as allowing the iteration to continue until a tolerance is
    reached, and plot in \subfigref{fig:priori}{h} the error $\tnorm{u-u_h}$ against $|\log_2 h|$. We obtain algebraic convergence, and already when $C_N=2$ we achieve the optimal convergence rate~$\mathcal{O}(h^2)$.
    
    \subsubsection{Validation of \thref{thm:a posteriori} and Remark~\ref{rm:epsilon}}
    We now consider automatic $h$-adaptive mesh refinement, with linear ($p=1$) basis functions, using Algorithm~\ref{algo:refinement} and the \emph{a posteriori} error bound
    from \thref{thm:a posteriori}. For the purpose of mesh refinement we use a fixed fraction refinement strategy, where the $25\%$ of elements
    with the largest local error indicators $\eta_K$ are marked for refinement, and the $5\%$ of elements with the smallest
    local error indicators are marked for derefinement.\medskip
    
        \emph{Example 1. Nonlinear diffusion}
        We first consider the case of a $u$-independent $f$ with a nonlinear $\mu$ on the unit square $\Omega=(0,1)^2 \subset \real^2$.
        To this end, we let
        \[
            \mu(\abs{\nabla u}) = 1 + \arctan(\abs{\nabla u}^2),
        \]
        and select $f$ such that the analytical solution to
        \eqref{eqn:nonlinear PDE}--\eqref{eqn:nonlinear PDE BC} is given by
        \[
            u(x, y) = x(1-x)y(1-y)(1-2y)\e^{-20(2x-1)^2}.
        \]
        We note that $\beta_1=\beta_2=0$, $\alpha_1=1 + \nicefrac{\sqrt{3}}{2}+\nicefrac{\pi}{3}$ and $\alpha_2=1$. For this problem we set
        the steering parameter $\vartheta =\nicefrac{1}{2}$ in Algorithm~\ref{algo:refinement}.
        
        \begin{figure}[t]
            \centering
            \subfloat[]{\label{fig:adaptive:square:err}\includegraphics[width=0.49\textwidth]{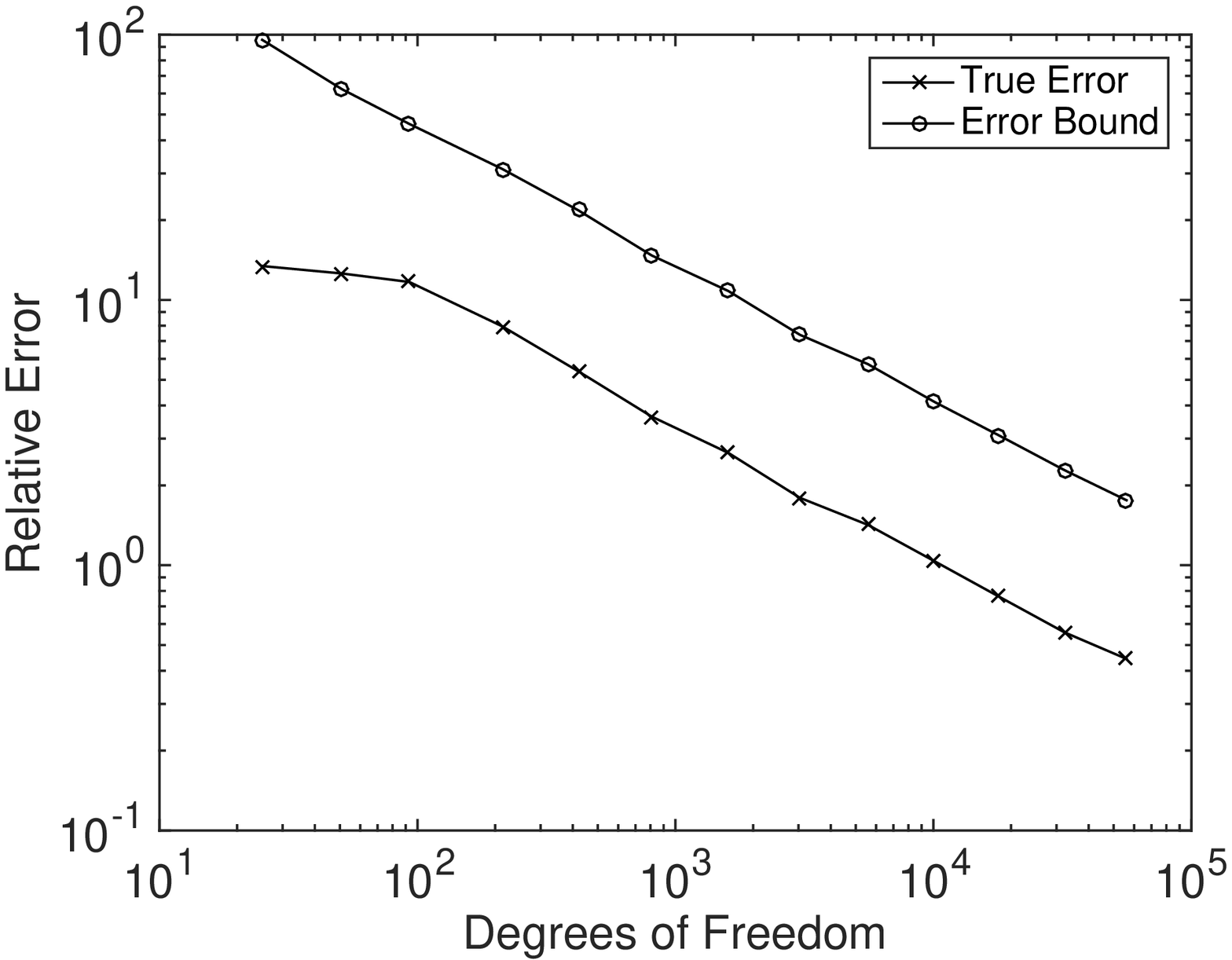}}
            \subfloat[]{\label{fig:adaptive:square:eff}\includegraphics[width=0.49\textwidth]{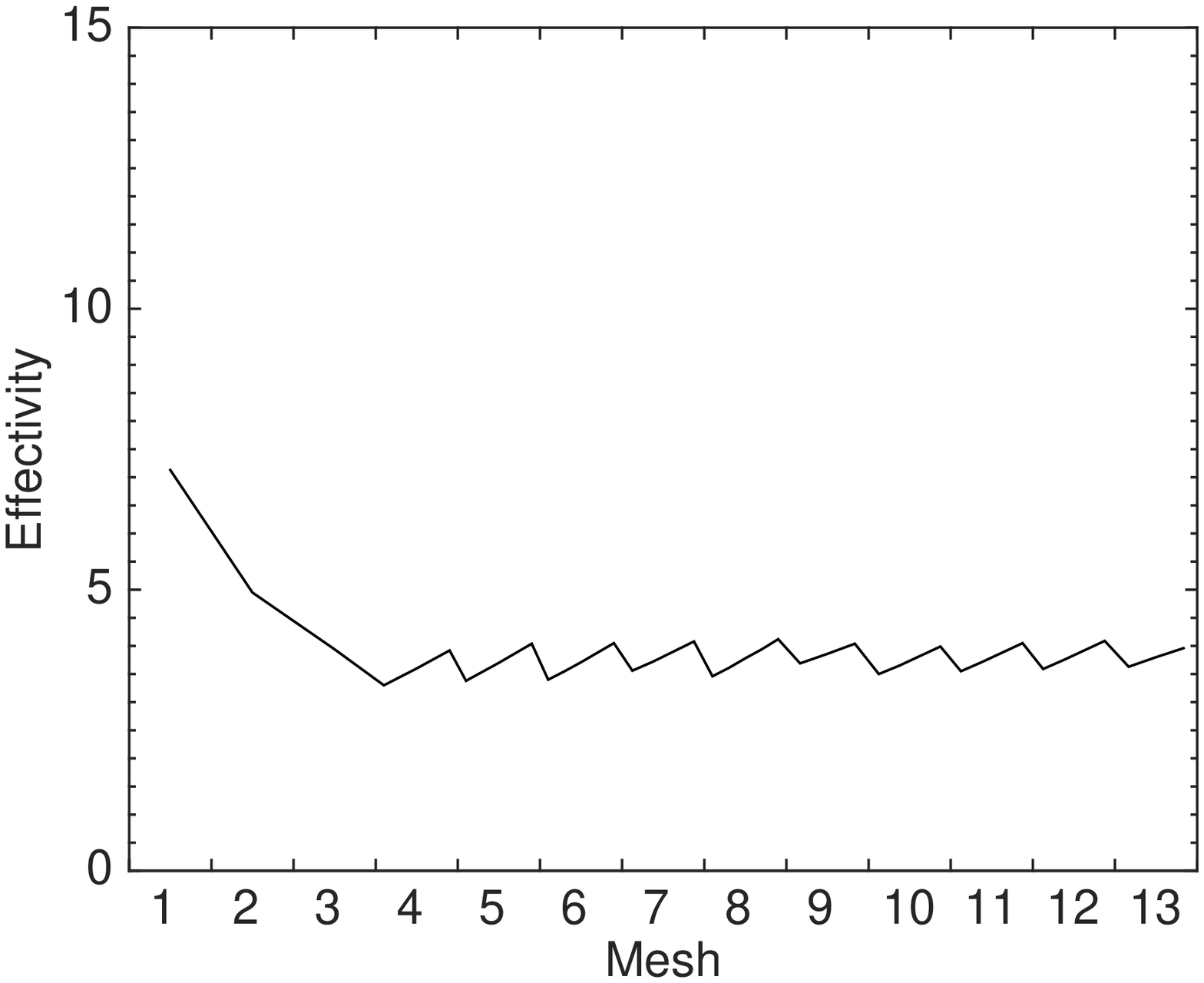}} \\
            \subfloat[]{\label{fig:adaptive:square:itn}\includegraphics[width=0.49\textwidth]{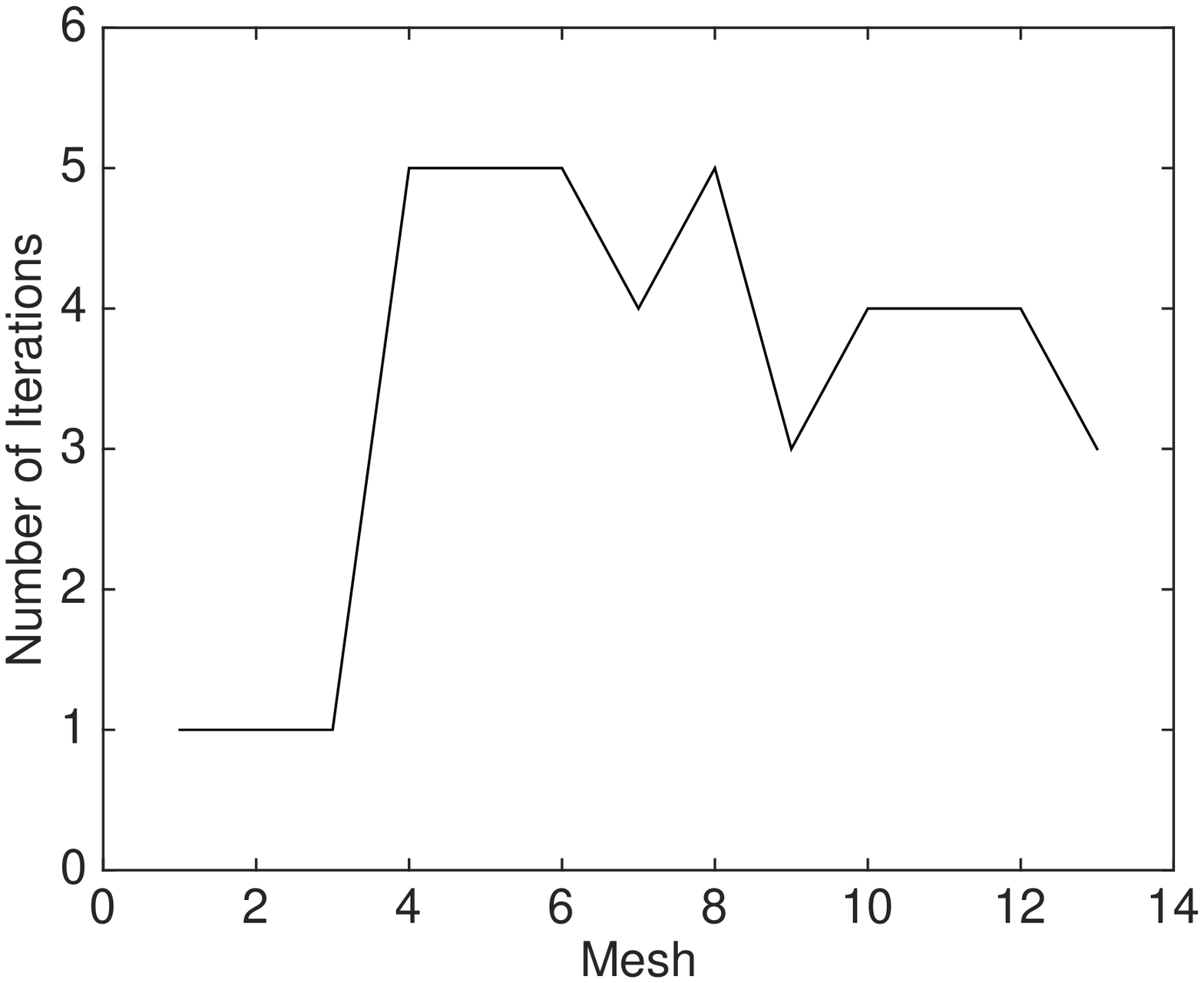}}
            \subfloat[]{\label{fig:adaptive:square:mesh}\includegraphics[width=0.49\textwidth]{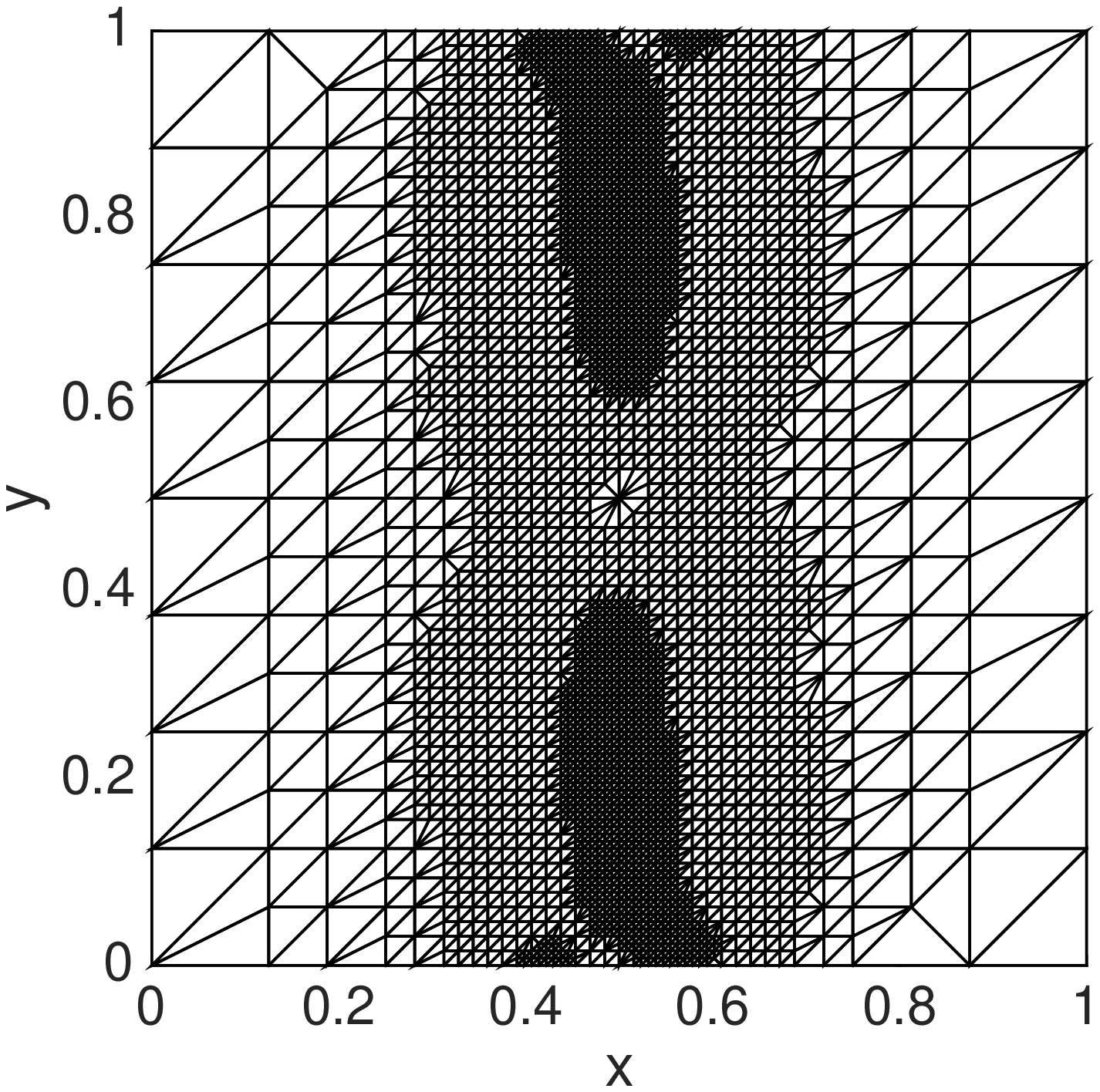}}
            \caption{Example 1. \protect\subref{fig:adaptive:square:err} Error in $\tnorm{\cdot}$-norm and error bound from
                \thref{thm:a posteriori} after the final fixed point iteration on each mesh compared to the number of degrees of freedom;
                \protect\subref{fig:adaptive:square:eff} Effectivity at each fixed point iteration for all meshes;
                \protect\subref{fig:adaptive:square:itn} Number of fixed point iterations on each mesh;
                \protect\subref{fig:adaptive:square:mesh} Mesh $\mesh[h,7]$ after 7 $h$-refinements.}
            \label{fig:adaptive:square}
        \end{figure}
        
        We first plot, in \subfigref{fig:adaptive:square}{err}, the relative true error $\nicefrac{\tnorm*{u-u_{h,i}^{n^\star}}}{\tnorm{u}}$ and the
        error bound, with $C_I=1$, from \thref{thm:a posteriori} after the last iteration $n^\star$ on each mesh $i$ against the number of
        degrees of freedom on that mesh. As can be seen, both the true error and the error bound converge at the same
        rate, and the error bound appears to overestimate the true error by a roughly constant amount. We also consider in
        \subfigref{fig:adaptive:square}{eff} the effectivity index at each step of the fixed point iteration on each mesh, where
        the effectivity index is the error bound (calculated with $C_I=1$) divided by the true error. As can be seen this is
        roughly constant (approximately $4$) for all meshes and iterations, which indicates that the error bound overestimates the
        true error by roughly this amount, independent of mesh properties. We do note, however, that the effectivity rises slightly
        due to the fixed point iteration on each mesh, this is likely caused by setting $C_I=1$. We also plot in
        \subfigref{fig:adaptive:square}{itn} the number of fixed point iterations at each mesh step required to ensure that
        the fixed point error is less than the finite element error. We note this is fairly constant although minor variations exist.
        A few mesh refinements are made early on which is likely caused by the fact that the 
        features of the solution are not accurately captured at the beginning.   
                
        In \subfigref{fig:adaptive:square}{mesh} we plot the mesh $\mesh[h,7]$ after 7 $h$-adaptive mesh refinements.
        The areas of mesh refinement coincide with the hill and valley in the analytical solution, which is the location we would
        expect the greatest error to occur, and matches the sort of refinement that occurs when the nonlinear methods are computed
        to a minimal residual. This suggests that the mesh refinement algorithm behaves in the expected manner.\medskip
        
        \begin{figure}[t]
            \centering
            \subfloat[]{\label{fig:adaptive:reac_diff:err}\includegraphics[width=0.49\textwidth]{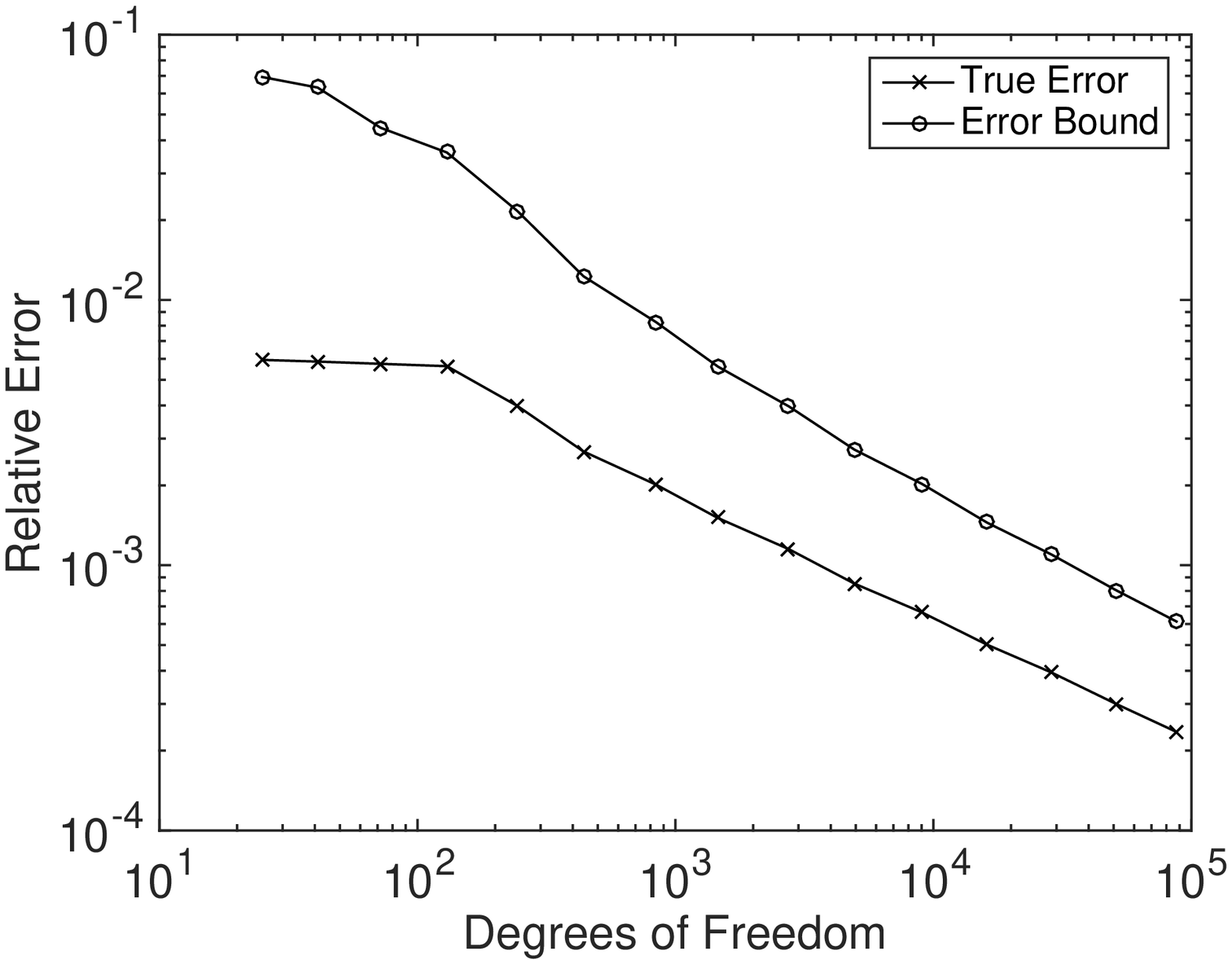}}
            \subfloat[]{\label{fig:adaptive:reac_diff:eff}\includegraphics[width=0.49\textwidth]{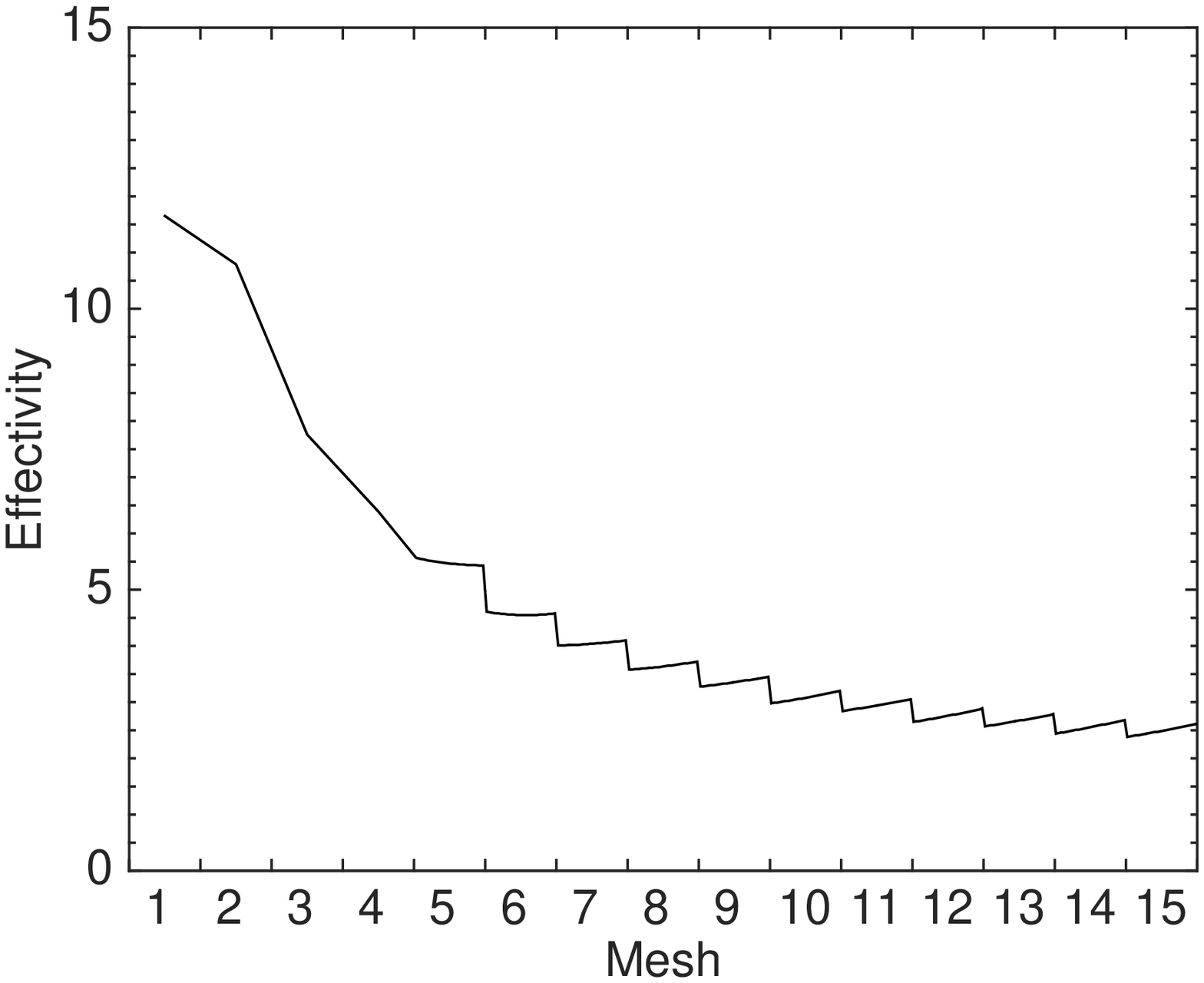}} \\
            \subfloat[]{\label{fig:adaptive:reac_diff:itn}\includegraphics[width=0.49\textwidth]{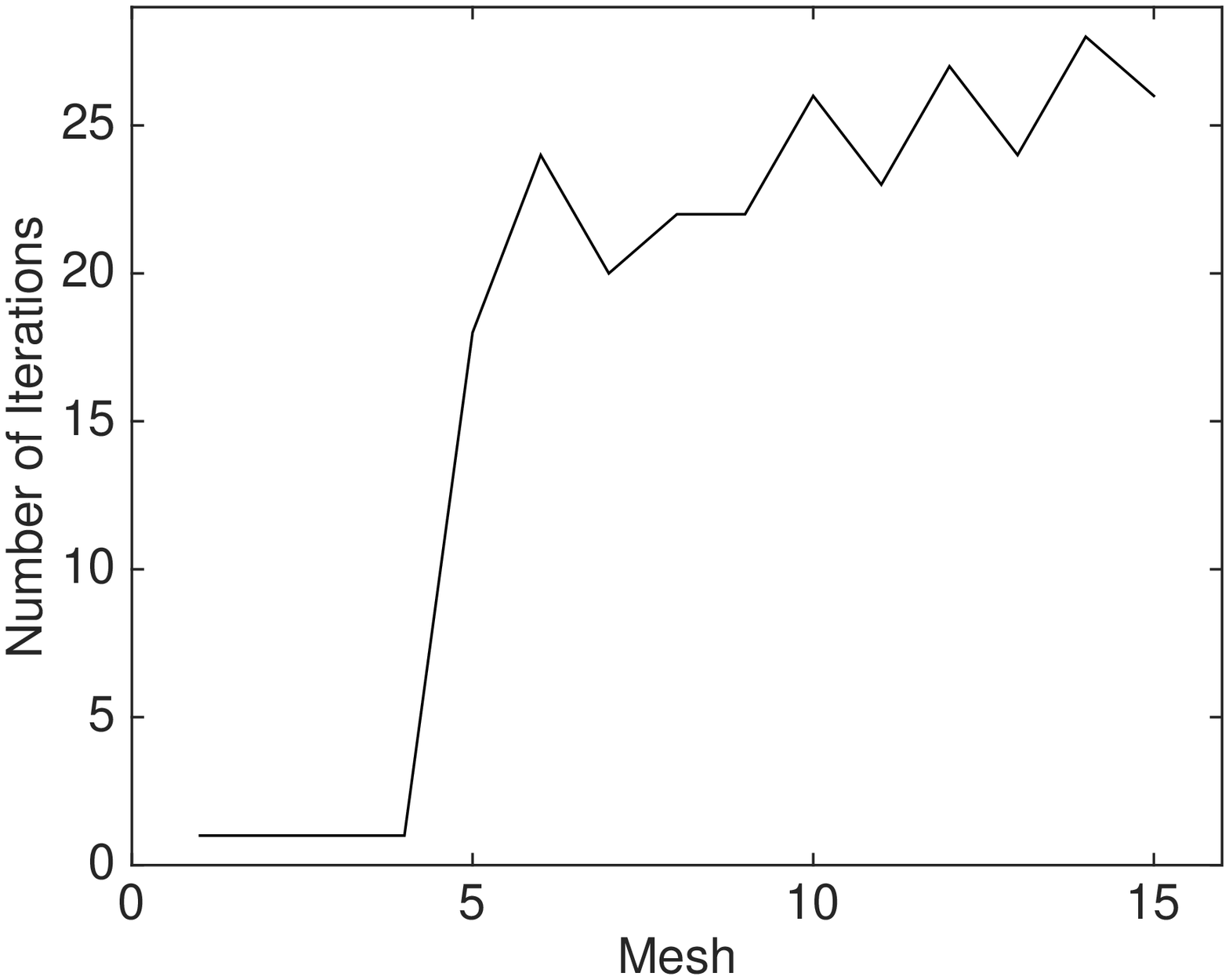}}
            \subfloat[]{\label{fig:adaptive:reac_diff:mesh}\includegraphics[width=0.49\textwidth]{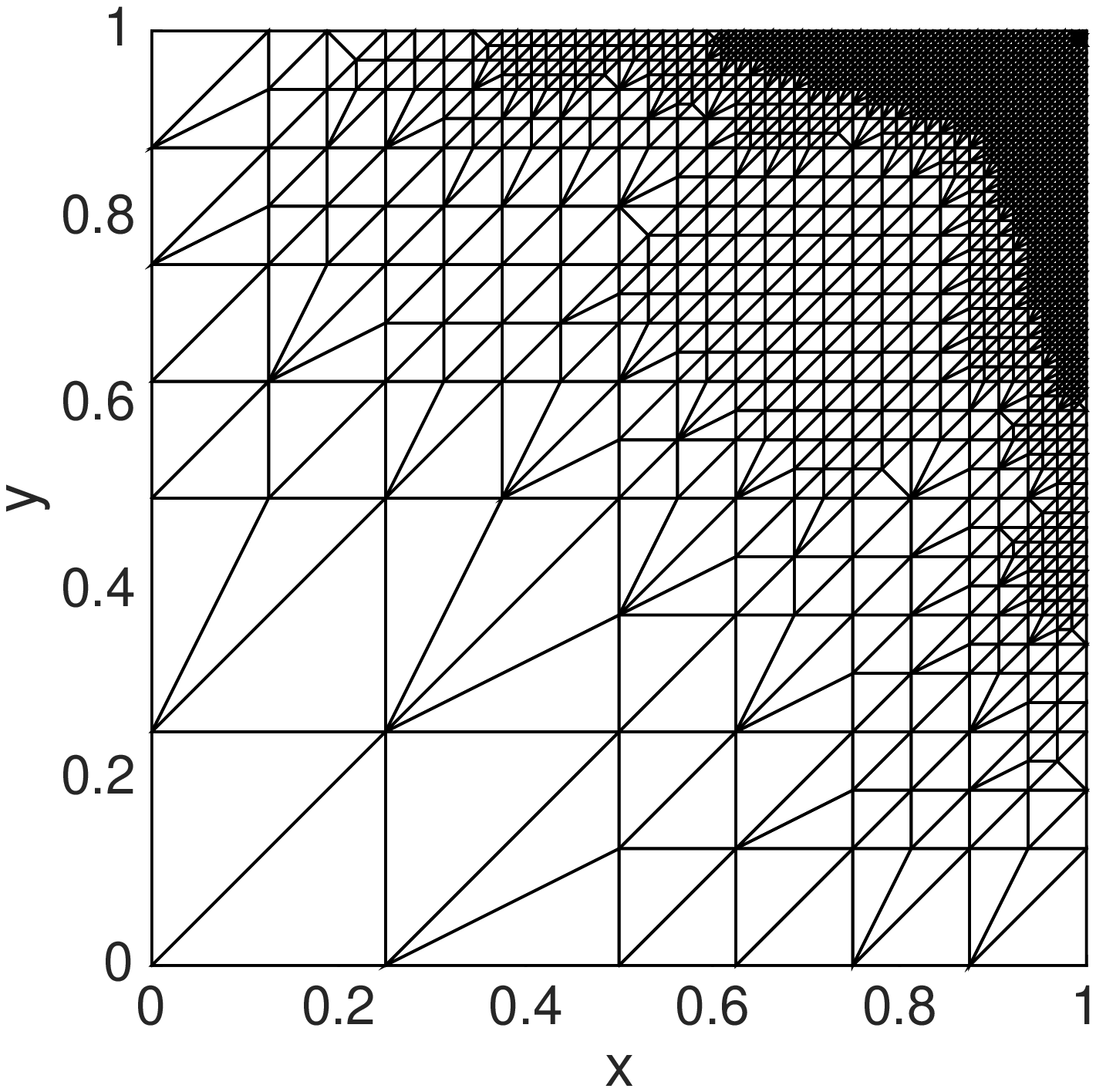}}
            \caption{Example 2. \protect\subref{fig:adaptive:reac_diff:err} Error in $\tnorm{\cdot}$-norm and error bound from
                \thref{thm:a posteriori} after the final fixed point iteration on each mesh compared to the number of degrees of freedom;
                \protect\subref{fig:adaptive:reac_diff:eff} Effectivity at each fixed point iteration for all meshes;
                \protect\subref{fig:adaptive:reac_diff:itn} Number of fixed point iterations on each mesh;
                \protect\subref{fig:adaptive:reac_diff:mesh} Mesh $\mesh[h,7]$ after 7 $h$-refinements.}
            \label{fig:adaptive:reac_diff}
        \end{figure}
        
        \emph{Example 2. Strong nonlinear reaction}
        We now consider a fairly strong nonlinear $f$ with a constant diffusion coefficient $\mu(\abs{\nabla u})=\varepsilon$, where $\varepsilon$ is a small positive constant,
        on the unit square $\Omega=(0,1)^2 \subset \real^2$. To this end, we consider $\varepsilon=0.01$ and
        let 
        \[
            f(u)=(0.2+x^2+y^2)\left(\frac{u^3}{u^2+1}+u\right)+c(x,y),
        \]
        where $c(x,y)$ is a function dependent only on $x$ and $y$ selected such that the analytical solution to
        \eqref{eqn:nonlinear PDE}--\eqref{eqn:nonlinear PDE BC} is given by
        \begin{equation}
            u(x, y) = (1-x)(1-y)(\e^{5x^2}-1)(\e^{5y^2}-1). \label{eqn:reac_diff:anal}
        \end{equation}
        We note that $\alpha_1=\alpha_2=\varepsilon$, $\beta_1 = \nicefrac{187}{40}$, and $\beta_2 = \nicefrac{1}{5}$. For this problem we set
        the steering parameter $\vartheta = 1$ in Algorithm~\ref{algo:refinement}.
              
        We again plot, in \subfigref{fig:adaptive:reac_diff}{err}, the relative true error $\nicefrac{\tnorm*{u-u_{h,i}^{n^\star}}}{\tnorm{u}}$ and 
        error bound, with $C_I=1$, from \thref{thm:a posteriori} after the last iteration $n^\star$ on each mesh $i$ against the number of
        degrees of freedom on that mesh.
        Except for a few early meshes, where the mesh is unlikely to accurately capture the
        boundary layer in the analytical solution, both the true error and the error bound converge at the similar
        rate with the error bound overestimating the true error by a roughly constant amount. This is supported by
        the effectivity indices at each iteration, \subfigref{fig:adaptive:reac_diff}{eff}, which are roughly constant
        for all meshes and iterations, although slightly decreasing over the course of the mesh refinement. For this problem
        we note that the number of fixed point iterations at each mesh step is fairly high, caused by the stronger nonlinearity,
        but only once the boundary layer is captured accurately by the mesh (the 5-th mesh onwards); cf.
        \subfigref{fig:adaptive:reac_diff}{itn}. Before this mesh the finite element error is considerably larger than the fixed point
        error due to the inaccurate capture of the boundary layer. The mesh after 7 $h$-adaptive mesh refinements,
        \subfigref{fig:adaptive:reac_diff}{mesh}, demonstrates how the mesh captures the boundary layer.
        This demonstrates
        an important benefit of only refining the fixed point error while it is greater than the finite element error, as
        the algorithm has managed to reduce the number of iterations in the early meshes by a considerable number
        by not performing fixed point iterations until after the mesh has started to accurately capture the solution's features.
        \medskip
        
        \begin{figure}[t]
            \centering
            \subfloat[]{\label{fig:adaptive:ex2:err}\includegraphics[width=0.49\textwidth]{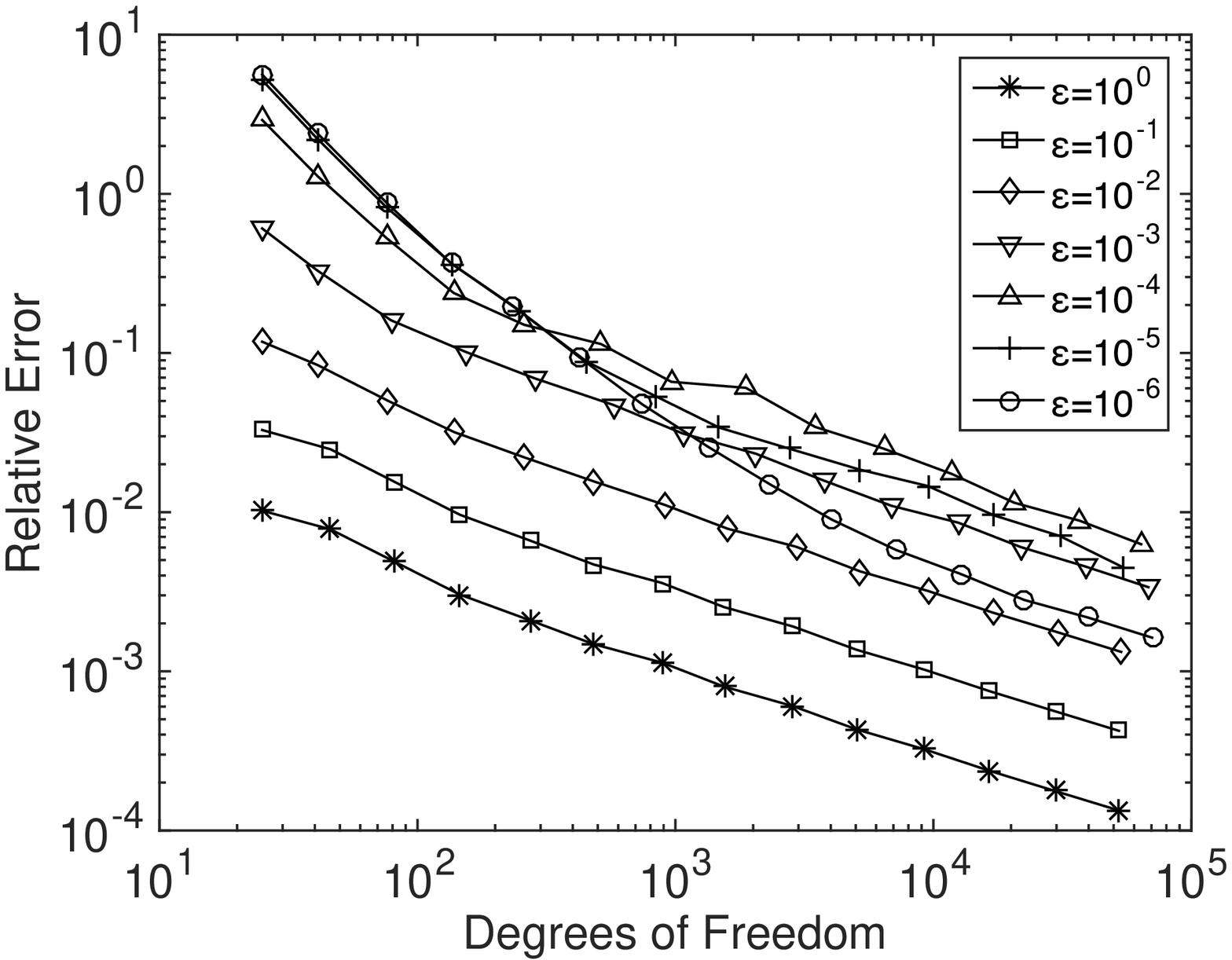}}
            \subfloat[]{\label{fig:adaptive:ex2:eff}\includegraphics[width=0.49\textwidth]{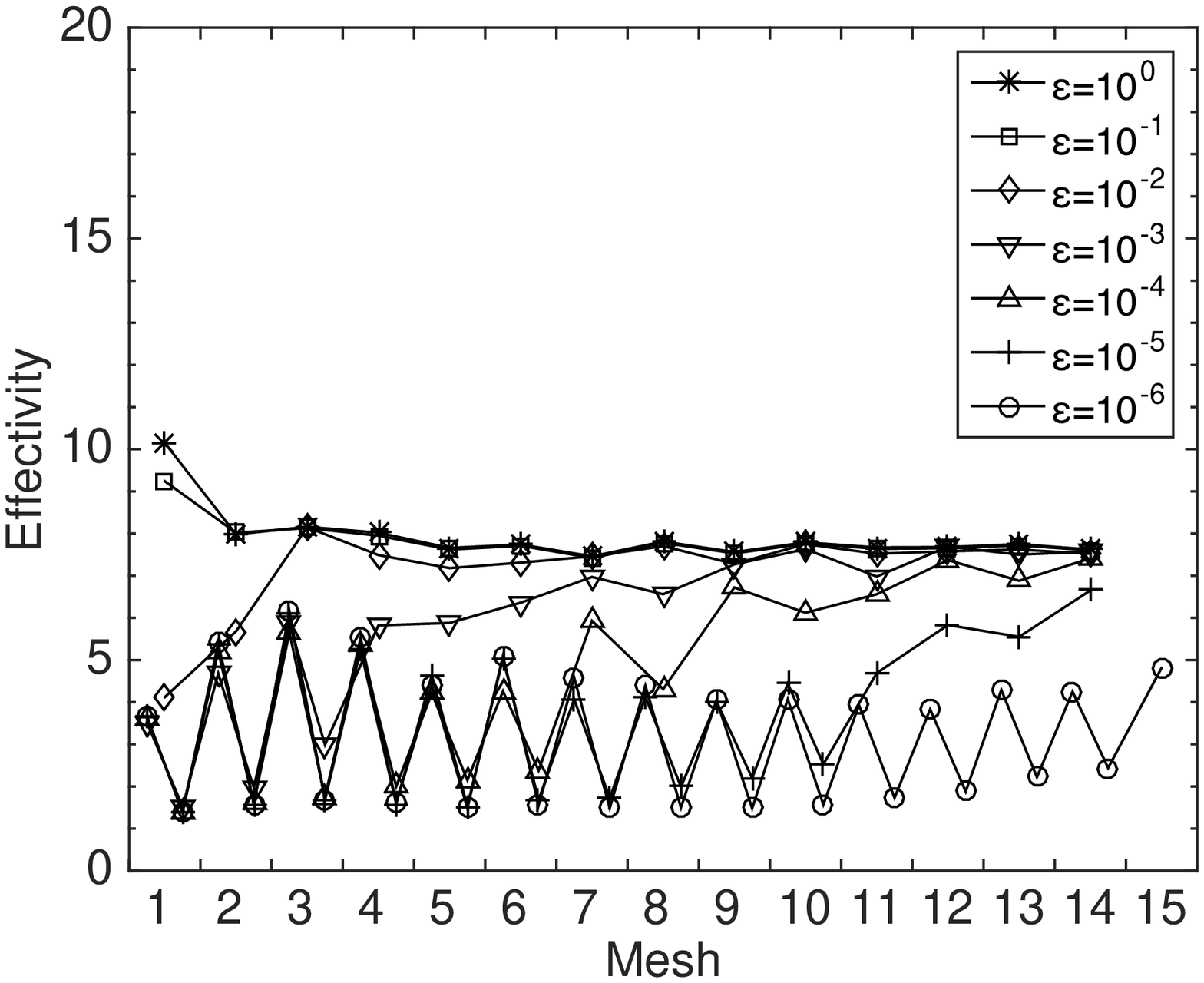}} \\
            \subfloat[]{\label{fig:adaptive:ex2:itn}\includegraphics[width=0.49\textwidth]{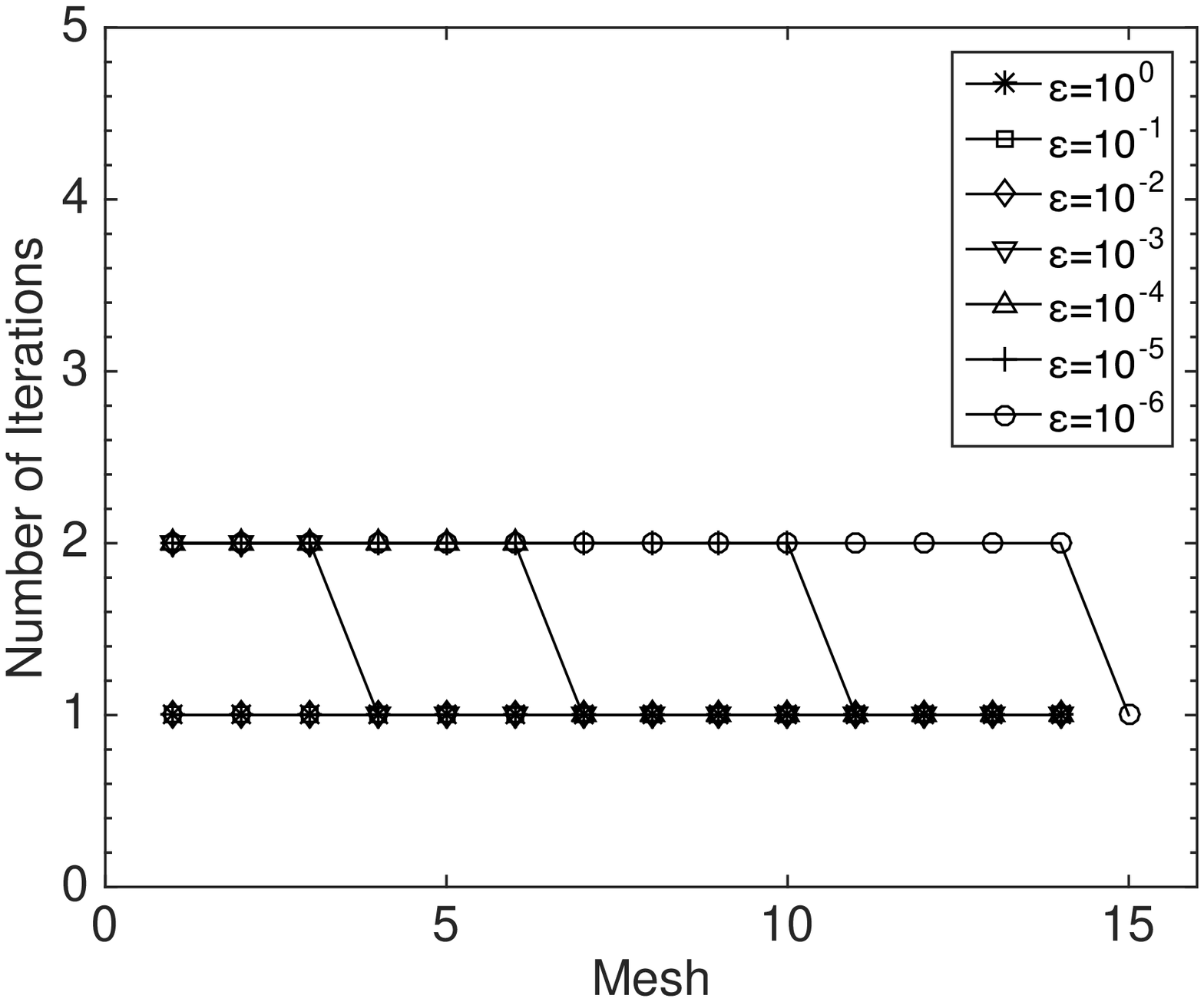}}
            \caption{Example 3. \protect\subref{fig:adaptive:ex2:err} Error in $\tnorm{\cdot}$-norm after the final fixed point iteration
                on each mesh compared to the number of degrees of freedom for various values of $\varepsilon$;
                \protect\subref{fig:adaptive:ex2:eff} Effectivity at each fixed point iteration for all meshes;
                \protect\subref{fig:adaptive:ex2:itn} Number of fixed point iterations on each mesh.}
            \label{fig:adaptive:ex2}
        \end{figure} 
        
        \emph{Example 3. Nonlinear reaction}
        We now consider a weaker nonlinear $f$ with a constant diffusion coefficient $\mu(\abs{\nabla u})=\varepsilon$, where $\varepsilon$ is a small positive constant, such that $\beta_1\approx\beta_2=\mathcal{O}(1)$,
        on the unit square $\Omega=(0,1)^2 \subset \real^2$. To this end, we consider $\varepsilon=10^{-k}$, for $k=0,\dots,6$, and
        let 
        \[
            f(u)=\frac{u^3}{10u^2+1}+u+c(x,y),
        \]
        where $c(x,y)$ is a function dependent only on $x$ and $y$ selected such that the analytical solution to
        \eqref{eqn:nonlinear PDE}--\eqref{eqn:nonlinear PDE BC} is given by \eqref{eqn:reac_diff:anal}.
        We note that $\alpha_1=\alpha_2=\varepsilon$, $\beta_1 = \nicefrac{89}{80}$, and $\beta_2 = 1$. For this problem we set
        the steering parameter $\vartheta = 1$ in Algorithm~\ref{algo:refinement}.       
       
        We plot, in \subfigref{fig:adaptive:ex2}{err}, the true error $\tnorm*{u-u_{h,i}^{n^\star}}$ for $\varepsilon=10^{-k}$, where
        $k=0,\dots,6$, on each mesh $i$ against the number of
        degrees of freedom on that mesh. We note that we appear to achieve a higher initial rate of convergence for smaller $\varepsilon$ values,
        although they all appear to tend to similar convergence rates as refinement progresses.  
        For each $\varepsilon$ we also calculate the effectivity indices at each iteration,
        \subfigref{fig:adaptive:ex2}{eff}. We note that initially they are highly oscillatory for small values of~$\varepsilon$, but as refinements progress they tend to smoothen towards a constant value, with the high values of $\varepsilon$ converging at earlier mesh steps; in particular, the effectivity indices do not deteriorate as~$\varepsilon\to 0^+$. We also plot in
        \subfigref{fig:adaptive:ex2}{itn} the number of fixed point iterations at each mesh step required to ensure that
        the fixed point error is less than the finite element error. We note this is fairly constant and independent of the value
        of $\varepsilon$, which supports Remark~\ref{rm:epsilon}.

\section{Conclusion}
\label{sec:conclusion}
    In this article we have shown that it is possible within a finite element framework to use a simple fixed point
    iteration to solve strongly monotone quasi-linear elliptic PDEs requiring only the computation of the iteration matrix,
    opposed to a Newton's method requiring computation at each iteration. We have shown that an optimal \emph{a priori}
    convergence rate can be obtained for a fixed number of iterations, dependent on the mesh size or polynomial degree.
    We have demonstrated that it is possible to perform adaptive mesh refinement based on an \emph{a posteriori} error analysis, where only
    a minimal number of fixed point iterations are required on each mesh to obtain a good approximation to the solution, without
    continuing the fixed point iteration until the fixed point error is insignificant.
\bibliographystyle{amsplain}
\bibliography{references}

\providecommand{\bysame}{\leavevmode\hbox to3em{\hrulefill}\thinspace}
\providecommand{\MR}{\relax\ifhmode\unskip\space\fi MR }
\providecommand{\MRhref}[2]{%
  \href{http://www.ams.org/mathscinet-getitem?mr=#1}{#2}
}
\providecommand{\href}[2]{#2}
\begin{thebibliography}{10}

\bibitem{AmreinWihler:14}
M.~Amrein and T.~P. Wihler, \emph{{An adaptive {Newton}-method based on a
  dynamical systems approach}}, Communications in Nonlinear Science and
  Numerical Simulation \textbf{19} (2014), no.~9, 2958--2973. \MR{3182870}

\bibitem{AmreinWihler:15}
\bysame, \emph{Fully adaptive {N}ewton-{G}alerkin methods for semilinear
  elliptic partial differential equations}, in review in SIAM J.~Sci.~Comput.,
  2015.

\bibitem{BaSu87}
I.~Babu{\v s}ka and M.~Suri, \emph{The $hp$--version of the finite element
  method with quasiuniform meshes}, RAIRO Anal. Num\'er. \textbf{21} (1987),
  199--238.

\bibitem{ChaillouSuri:06}
A.~Chaillou and M.~Suri, \emph{{Computable error estimators for the
  approximation of nonlinear problems by linearized models}}, Computer Methods
  in Applied Mechanics and Engineering \textbf{196} (2006), no.~1-3, 210--224.
  \MR{2270132}

\bibitem{ChaillouSuri:07}
\bysame, \emph{{A posteriori estimation of the linearization error for strongly
  monotone nonlinear operators}}, Journal of Computational and Applied
  Mathematics \textbf{205} (2007), no.~1, 72--87. \MR{2324826}

\bibitem{Clement:75}
P.~Cl{\'e}ment, \emph{Approximation by finite element functions using local
  regularization}, RAIRO Analyse Num\'erique \textbf{9} (1975), no.~R-2,
  77--84.

\bibitem{El-AlaouiErnVohralik:11}
L.~El~Alaoui, A.~Ern, and M.~Vohral{\'\i}k, \emph{{Guaranteed and robust a
  posteriori error estimates and balancing discretization and linearization
  errors for monotone nonlinear problems}}, Computer Methods in Applied
  Mechanics and Engineering \textbf{200} (2011), no.~37-40, 2782--2795.
  \MR{2811915}

\bibitem{Han:94}
W.~Han, \emph{{A posteriori error analysis for linearization of nonlinear
  elliptic problems and their discretizations}}, Mathematical Methods in the
  Applied Sciences \textbf{17} (1994), no.~7, 487--508. \MR{1277301}

\bibitem{Liu1994}
W.~B. Liu and John~W. Barrett, \emph{Quasi-norm error bounds for the finite
  element approximation of some degenerate quasilinear elliptic equations and
  variational inequalities}, RAIRO Mod\'el. Math. Anal. Num\'er. \textbf{28}
  (1994), no.~6, 725--744.

\bibitem{MakridakisNochetto:03}
C.~Makridakis and R.~H. Nochetto, \emph{{Elliptic reconstruction and a
  posteriori error estimates for parabolic problems}}, SIAM Journal on
  Numerical Analysis \textbf{41} (2003), no.~4, 1585--1594. \MR{2034895}

\bibitem{necas}
J.~Ne\v{c}as, \emph{Introduction to the theory of nonlinear elliptic
  equations}, John Wiley and Sons, 1986.

\bibitem{Schwab1998}
C.~Schwab, \emph{$p$- and $hp$-{FEM} --- theory and applications in solid and
  fluid mechanics}, Numerical Mathematics and Scientific Computation, Oxford
  University Press, Oxford, 1998.

\bibitem{Ve98}
R.~Verf{\"u}rth, \emph{Robust a posteriori error estimators for a singularly
  perturbed reaction-diffusion equation}, Numer. Math. \textbf{78} (1998),
  no.~3, 479--493. \MR{1603287 (99c:65218)}

\bibitem{Zeidler}
E.~Zeidler, \emph{Applied functional analysis}, Applied Mathematical Sciences,
  vol. 108, Springer-Verlag, New York, 1995, Applications to Mathematical
  Physics.

\end{thebibliography}

\end{document}